\newcommand{\ZZ}{{\mathbb Z}}
\newcommand{\Q}{{\mathbb Q}}
\newcommand{\R}{{\mathcal R}}
\newcommand{\Sc}{{\mathcal S}}
\def\Gal{\operatorname{Gal}}
\def\ram{\operatorname{Ram}}
\def\mf{\mathfrak}
\def\bs{\backslash}
\newtheorem{thm}{Theorem}[section]
\newtheorem{theorem}[thm]{Theorem}
\newtheorem{lemma}[thm]{Lemma}	
\newtheorem{proposition}[thm]{Proposition}
\theoremstyle{definition}
\theoremstyle{remark}
\newtheorem{remark}[thm]{Remark}
\newtheorem*{lemma*}{Lemma}
\numberwithin{equation}{section}
\title{Isospectral towers of Riemannian manifolds}
\author{Benjamin Linowitz}
\address{Department of Mathematics\\ 
6188 Kemeny Hall\\
Dartmouth College\\
Hanover, NH 03755, USA}
\email[] {Benjamin.D.Linowitz@dartmouth.edu}
\begin{document}

\begin{abstract} 
In this paper we construct, for $n\geq 2$, arbitrarily large families of infinite towers of compact, orientable Riemannian $n$-manifolds which are isospectral but not isometric at each stage. In dimensions two and three, the towers produced consist of hyperbolic $2$-manifolds and hyperbolic $3$-manifolds, and in these cases we show that the isospectral towers do not arise from Sunada's method.
\end{abstract}

\maketitle


\section{Introduction}

Let $M$ be a closed Riemannian $n$-manifold. The eigenvalues of the Laplace-Beltrami operator acting on the space $L^2(M)$ form a discrete sequence of non-negative real numbers in which each value occurs with a finite multiplicity. This collection of eigenvalues is called the spectrum of $M$, and two Riemannian $n$-manifolds are said to be \textit{isospectral} if their spectra coincide. Inverse spectral geometry asks the extent to which the geometry and topology of $M$ is determined by its spectrum. Whereas volume and scalar curvature are spectral invariants, the isometry class is not. Although there is a long history of constructing Riemannian manifolds which are isospectral but not isometric, we restrict our discussion to those constructions most relevant to the present paper and refer the reader to \cite{Gordon-Survey} for an excellent survey.

In 1980 Vigneras \cite{vigneras} constructed, for every $n\geq 2$, pairs of isospectral but not isometric Riemannian $n$-manifolds. These manifolds were obtained as quotients of products $\textbf{H}_2^a\textbf{H}_3^{b}$ of hyperbolic upper-half planes and upper-half spaces by discrete groups of isometries obtained via orders in quaternion algebras. A few years later Sunada \cite{Sunada} described an  algebraic method of producing isospectral manifolds, which he used to construct further examples of isospectral but not isometric Riemann surfaces. Sunada's method is extremely versatile and is responsible for the majority of the known examples of isospectral but not isometric Riemannian manifolds.

Before stating our main result we require some additional terminology. We will say that a collection $\{ M_i \}_{i=0}^\infty$ of Riemannian manifolds is a tower if for every $i\geq 0$ there is a finite cover $M_{i+1}\rightarrow M_i$. We shall say that two infinite towers $\{ M_i\}$ and $\{N_i\}$ of Riemannian $n$-manifolds are \textit{isospectral towers} if for every $i\geq 0$, $M_i$ and $N_i$ are isospectral but not isometric.

\begin{theorem}\label{theorem:intromain}
For all $m,n\geq 2$ there exist infinitely many families of isospectral towers of compact Riemannian $n$-manifolds, each family being of cardinality $m$.
\end{theorem}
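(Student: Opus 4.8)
The plan is to realize the towers as quotients of $X=\textbf{H}_2^a\textbf{H}_3^b$ by congruence subgroups of unit groups of suborders of a quaternion division algebra, in the spirit of Vigneras, with the tower structure coming from a descending chain of Eichler-type orders at a fixed prime and the isospectrality coming from the fact that all the orders in play will lie in a single genus. First I would reduce to an arithmetic problem. Given $m,n\ge 2$, write $n=2a+3b$ with integers $a,b\ge 0$, which is possible for every $n\ge 2$; then choose a number field $K$ with at least $a$ real places and exactly $b$ complex places, and a quaternion division algebra $B$ over $K$ split at $a$ prescribed real places and at all complex places, ramified at the remaining archimedean places and at a finite set of finite primes chosen so that the ramification set has even cardinality and $B$ is a division algebra. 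Then the group of reduced-norm-one elements of any order of $B$ is, via the $a+b$ split archimedean places, a lattice in $\operatorname{SL}_2(\mathbb{R})^a\times\operatorname{SL}_2(\mathbb{C})^b$ acting on $X$ with $\dim X=n$, and since $B$ is a division algebra the quotient is compact. By enlarging the set of finite primes at which $B$ ramifies (or the class number of $K$) I would arrange that the type number $t(B)$, i.e.\ the number of $B^\times$-conjugacy classes of maximal orders of $B$, exceeds any prescribed bound; fix it to be at least $cm$, where $c$ is the bounded ambiguity entering the non-isometry step below.

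Next I would fix the orders and build the tower. Since all maximal orders of $B$ are conjugate locally at every finite place, they constitute a single genus; fix maximal orders $\mathcal{O}^{(1)},\dots,\mathcal{O}^{(cm)}$ representing distinct conjugacy classes, together with explicit local elements conjugating one to another. To ensure that every quotient below is a manifold rather than merely an orbifold, fix an auxiliary prime $\mathfrak{q}$ of $K$, split in $B$ and of large residue characteristic, and replace each $\mathcal{O}^{(j)}$ by the suborder defined by the same principal $\mathfrak{q}$-congruence condition (transported by the chosen local conjugations); this preserves both the genus and the pairwise non-conjugacy and forces all unit groups to be torsion-free. Then fix a further prime $\mathfrak{p}$ of $K$ split in $B$, so that after conjugation $\mathcal{O}^{(j)}_{\mathfrak{p}}\cong\operatorname{M}_2(\mathcal{O}_{K,\mathfrak{p}})$, and for $k\ge 0$ let $\mathcal{O}^{(j)}_k\subseteq\mathcal{O}^{(j)}$ be the Eichler order of level $\mathfrak{p}^k$ inside $\mathcal{O}^{(j)}$, taken in the position determined by the fixed local conjugation, so that $\mathcal{O}^{(1)}_k,\dots,\mathcal{O}^{(cm)}_k$ again form a single genus. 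Writing $\Gamma^{(j)}_k$ for the reduced-norm-one units of $\mathcal{O}^{(j)}_k$ and $M^{(j)}_k:=\Gamma^{(j)}_k\backslash X$, one has $\Gamma^{(j)}_0\supseteq\Gamma^{(j)}_1\supseteq\cdots$ with finite indices, so $\{M^{(j)}_k\}_{k\ge 0}$ is a tower of compact Riemannian $n$-manifolds for each $j$.

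It then remains to verify isospectrality and non-isometry at each level and to produce infinitely many families. Fix $k$. Because $\mathcal{O}^{(1)}_k,\dots,\mathcal{O}^{(cm)}_k$ lie in a single genus, I would invoke Vigneras' mechanism: the length spectrum of $\Gamma^{(j)}_k\backslash X$, and hence, via the Selberg trace formula on the product $X$, its Laplace spectrum, is determined by the weighted count of conjugacy classes in $\Gamma^{(j)}_k$, which is governed by the optimal embedding numbers of quadratic orders of $K$ into $\mathcal{O}^{(j)}_k$; these numbers are products of purely local quantities and so are constant across a genus, making $M^{(1)}_k,\dots,M^{(cm)}_k$ mutually isospectral. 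Conversely, an isometry $M^{(i)}_k\cong M^{(j)}_k$ would, by Margulis' description of the commensurator of an arithmetic lattice, be induced by conjugation by an element of $B^\times$ up to a finite normalizer contribution; since $\mathcal{O}^{(j)}_k$ has only a bounded number of maximal overorders, among which is $\mathcal{O}^{(j)}$, such a conjugation would force $\mathcal{O}^{(i)}$ and $\mathcal{O}^{(j)}$ to be conjugate up to the bounded ambiguity $c$. Having started from $cm$ distinct conjugacy classes of maximal orders, at least $m$ of the $M^{(j)}_k$ are therefore pairwise non-isometric at every level $k$; retaining those $m$ indices yields an isospectral tower of cardinality $m$. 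Finally, letting $B$ range over infinitely many pairwise non-isomorphic quaternion division algebras with the prescribed archimedean splitting and type number at least $cm$ (obtained by varying the finite ramification set), and/or varying $\mathfrak{p}$, produces infinitely many such $m$-tuples of towers, lying in distinct commensurability classes. The hard part will be the isospectrality step: one must check that passing to the Eichler suborders $\mathcal{O}^{(j)}_k$ genuinely preserves the equality of embedding data, in particular that the selectivity phenomenon does not intervene, and that the trace-formula identity really transfers from the length spectrum to the Laplace spectrum on a product of hyperbolic $2$- and $3$-spaces; establishing non-isometry uniformly in $k$, and controlling the constant $c$, is the secondary difficulty.
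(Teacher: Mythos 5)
Your overall strategy is the same as the paper's: take a quaternion division algebra $B$ over a number field $K$ with large type number, build descending chains of orders inside non-conjugate maximal orders, get isospectrality from the constancy of embedding numbers across a genus (Vigneras), and non-isometry from non-conjugacy of the orders. The essential difference is your choice of local orders for the descent, and that is where the argument has a genuine gap. You take Eichler orders of level $\mathfrak{p}^k$ and assert that the suborders $\mathcal{O}^{(1)}_k,\dots,\mathcal{O}^{(cm)}_k$ remain pairwise non-conjugate; this is not automatic and is exactly the delicate point. Non-conjugacy within a genus is measured by the group $J_K/K^\times n(\mathfrak N(\mathcal{R}))$, and for an Eichler order of level $\mathfrak{p}^k$ the local normalizer at $\mathfrak{p}$ contains the Atkin--Lehner element of reduced norm $\pi_{\mathfrak{p}}^k$. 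For odd $k$ this enlarges $K^\times n(\mathfrak N(\mathcal{R}))$ (unless $\mathfrak{p}$ happens to split completely in the class field $K(B)$), which can cut the number of isomorphism classes in the genus in half and collapse previously distinct classes -- so non-isometry can fail at odd levels, and your "retain $m$ indices that work at every level" step has nothing to retain. The paper avoids this by descending instead through the primitive orders $R_{2i}(F)=\mathcal{O}_F+\xi\pi_F^i\mathcal{O}_F$ of Jun at primes $\mu_j$ chosen by Chebotarev, and proving (Proposition \ref{proposition:normnormprop}) that $n(N(R_{2i}))=\mathcal{O}_k^\times k^{\times 2}$ exactly, so that $K(\mathcal{R}^j_i)=K(B)$ at every level; this single invariant is what guarantees both the persistence of the $t$ distinct conjugacy classes down the whole chain and (together with $\ram_f(B)\neq\emptyset$) the absence of selectivity. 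Relatedly, you explicitly leave the selectivity question open, but it is not a side issue: it is the content of the isospectrality claim. The paper closes it in Proposition \ref{proposition:noselectivity} by observing that a finite ramified prime splits completely in $K(B)$, so no maximal subfield $L$ of $B$ lies in $K(\mathcal{R}^j_i)=K(B)$, and hence every order in the genus admits the same quadratic orders.

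Two smaller points. Your non-isometry step via "Margulis plus a bounded ambiguity constant $c$" is not made precise: an isometry of the quotients corresponds to a $\Q$-automorphism of $B$ carrying one order to the other, so one needs (as in Vigneras' Theorem 8, which the paper invokes) that all such automorphisms are inner; with that in hand no inflation factor $c$ is needed, and without it the boundedness of $c$ is unproved. And your device of imposing a principal congruence condition at an auxiliary prime $\mathfrak{q}$ to kill torsion changes the orders out of the Eichler class, so the embedding-number and normalizer computations would have to be redone for these modified orders; the paper instead chooses $K$ and $B$ (again via Vigneras) so that every order of $B$ already has torsion-free norm-one unit group.
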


Like Vigneras' examples, our towers arise as quotients of products of hyperbolic upper-half planes and spaces so that in dimension two (respectively dimension three) the manifolds produced are hyperbolic $2$-manifolds (respectively hyperbolic $3$-manifolds). It follows from the Mostow Rigidity Theorem (see \cite{Mostow-Rigidity}, \cite[page 69]{mac-reid-book}) that in dimension three, the fundamental groups of the manifolds at each stage of the isospectral towers are non-isomorphic.

The proof of Theorem \ref{theorem:intromain} is entirely number-theoretic and relies on the construction of suitable chains of quaternion orders. To our knowledge the only construction of isospectral towers are given by McReynolds \cite{Mcreynolds}, who used a variant of Sunada's method to produce manifolds as quotients of symmetric spaces associated to non-compact Lie groups. Although McReynolds' methods apply in our setting, our results are of independent interest. Not only do we produce arbitrarily large families of isospectral towers, but we additionally show that in dimensions two and three our isospectral towers cannot arise from Sunada's method. 

\begin{theorem}\label{theorem:intronotsunada}
In dimensions two and three the isospectral towers constructed in Theorem \ref{theorem:intromain} do not arise from Sunada's method.
\end{theorem}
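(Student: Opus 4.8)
The plan is to exploit the fact that Sunada's method always yields a pair with a common finite \emph{regular} cover, and then to use Borel's classification of maximal arithmetic subgroups together with strong approximation to show that the manifolds produced in Theorem~\ref{theorem:intromain} admit no such cover. Recall that a pair of isospectral, non-isometric manifolds $(X,Y)$ is said to \emph{arise from Sunada's method} if there are a closed Riemannian manifold $W$, a finite group $G$ acting freely on $W$ by isometries, and almost conjugate subgroups $H_1,H_2\le G$ (that is, $\mathbb C[G/H_1]\cong\mathbb C[G/H_2]$ as $G$-modules) with $X\cong H_1\backslash W$ and $Y\cong H_2\backslash W$. Then $W\to X$ and $W\to Y$ are both regular covers, with deck groups $H_1$ and $H_2$. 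In dimensions two and three all the manifolds in sight are of the form $\Gamma\backslash\mathbf H^n$ for lattices $\Gamma<\mathrm{Isom}^+(\mathbf H^n)$, and this says: if the stage-$i$ manifolds $M_i$ and $N_i$ of two of our towers arise from Sunada's method, then, after conjugating $\pi_1(N_i)$ inside $\mathrm{Isom}^+(\mathbf H^n)$, there is a lattice $\Lambda$ normal in both $\pi_1(M_i)$ and $\pi_1(N_i)$. Hence $\pi_1(M_i)$ and $\pi_1(N_i)$ both lie in $N_{\mathrm{Isom}^+(\mathbf H^n)}(\Lambda)$, which is again a lattice, so they generate a discrete group and therefore lie in a common maximal arithmetic subgroup.

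Next I would feed in the arithmetic. In the construction underlying Theorem~\ref{theorem:intromain} the manifolds $M_i$ and $N_i$ are $\Gamma_{\mathcal O}\backslash\mathbf H^n$ and $\Gamma_{\mathcal O'}\backslash\mathbf H^n$ for quaternion orders $\mathcal O,\mathcal O'$ in a single quaternion algebra $B$ over a number field $k$ which lie in a common genus but in distinct conjugacy (type) classes under $B^\times$; this is precisely what makes $M_i$ and $N_i$ isospectral but not isometric. Since these groups are arithmetic, the conjugating element in the previous paragraph is induced by an element of $B^\times$ (by Margulis's commensurator criterion, or by Mostow rigidity when $n=3$), so the conclusion becomes: for some $B^\times$-conjugate $\mathcal O''$ of $\mathcal O'$ (thus $\mathcal O''$ still has the type of $\mathcal O'$, and lies in the genus of $\mathcal O$), the groups $\Gamma_{\mathcal O}$ and $\Gamma_{\mathcal O''}$ lie in a common maximal arithmetic subgroup. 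By Borel's classification, every maximal arithmetic subgroup in this commensurability class is the normalizer $N(\Gamma_{\mathcal E})$ of an Eichler order $\mathcal E$ of squarefree level. Taking $\mathcal O$ and $\mathcal O'$ to be Eichler orders of squarefree level --- as one may in the construction --- one then analyzes the containment $\Gamma_{\mathcal O},\Gamma_{\mathcal O''}\subseteq N(\Gamma_{\mathcal E})$ locally on the Bruhat--Tits trees: at each finite place it forces $\mathcal E$ to contain both $\mathcal O$ and $\mathcal O''$, and moreover $\mathcal O$, $\mathcal O''$ and $\mathcal E$ all coincide at every place not dividing the level of $\mathcal E$.

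The final step is to conclude that $\mathcal O$ and $\mathcal O''$ must then be $B^\times$-conjugate, which contradicts the fact that they have distinct types. For this I would invoke strong approximation for the algebraic group $\mathrm{SL}_1(B)$ --- available precisely because $B$ splits at the archimedean place used to uniformize $\mathbf H^n$ --- which shows that the group of norm-one units of $\mathcal E$ acts transitively on the set of Eichler orders of the level of $\mathcal O$ that are contained in $\mathcal E$. Since $\mathcal O$ and $\mathcal O''$ are two such orders, they are conjugate by an element of $\mathcal E^1\subset B^\times$; this is the desired contradiction. As $i$ was arbitrary and the same argument applies to every pair of towers in each family, none of the isospectral towers of Theorem~\ref{theorem:intromain} arises from Sunada's method in dimensions two and three.

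The step I expect to be the main obstacle is the passage from ``$\Gamma_{\mathcal O}$ and $\Gamma_{\mathcal O''}$ lie in a common Borel-maximal group'' to ``$\mathcal O$ and $\mathcal O''$ are $B^\times$-conjugate.'' It splits into (i) a combinatorial analysis, vertex by vertex on the Bruhat--Tits trees, pinning down which Eichler orders $\mathcal E$ of squarefree level have a normalizer containing $\Gamma_{\mathcal O}$ --- the key point being that such an $\mathcal E$ must contain $\mathcal O$ and can differ from it only at the primes dividing its (squarefree) level, where the local picture is rigid --- and (ii) the strong-approximation input giving transitivity of the global norm-one unit group on the relevant local configurations. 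Everything else --- the reduction to a common regular cover, the identification of the conjugating isometry with an element of $B^\times$, and the fact (already part of Theorem~\ref{theorem:intromain}) that orders of distinct type yield non-isometric, not merely non-conjugate, manifolds --- is formal.
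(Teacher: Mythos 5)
Your overall strategy (common regular cover $\Rightarrow$ common discrete overgroup $\Rightarrow$ arithmetic rigidity forces the two orders into a common configuration $\Rightarrow$ contradiction with non-conjugacy) is the right shape, and the first step --- that a Sunada sandwich puts $\pi_1(M_i)$ and a $B^\times$-conjugate of $\pi_1(N_i)$ inside a common lattice --- matches the paper. But there is a genuine gap at the arithmetic heart of your argument: the orders $\mathcal O,\mathcal O'$ in the construction of Theorem \ref{theorem:intromain} are \emph{not} Eichler orders of squarefree level, and cannot be taken to be. The towers require an infinite strictly descending chain $\R_0\supsetneq\R_1\supsetneq\cdots$ within a single genus class field $K(\R_i)=K(B)$; the paper uses Jun's primitive orders $R_{2m}=\mathcal O_F+\xi\pi_F^m\mathcal O_F$ precisely because $n(N(R_{2m}))=\mathcal O_k^\times k^{\times 2}$ keeps $K(\R_i)=K(B)$ at every level, whereas Eichler orders of nontrivial level acquire Atkin--Lehner elements in their normalizers, which changes $K(\R)$ and destroys condition (3) of Theorem \ref{theorem:chainsexist} (on which the isospectrality proof rests), and squarefree level is incompatible with an infinite descending chain in any case. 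Consequently your Bruhat--Tits analysis of Eichler orders inside a Borel-maximal normalizer, and the strong-approximation transitivity statement for ``Eichler orders of the level of $\mathcal O$ contained in $\mathcal E$,'' are not available; the step you yourself flag as the main obstacle is exactly where the argument breaks. (Separately, even the implication $\Gamma_{\mathcal O}\subseteq N(\Gamma_{\mathcal E})\Rightarrow\mathcal O\subseteq\mathcal E$ needs an argument: a priori the unit group $\mathcal O^1$ could sit inside $N(\Gamma_{\mathcal E})$ without the orders being nested.)

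For comparison, the paper fills this hole with order-specific input: Proposition \ref{proposition:norm1} shows $\mathcal O_k[R_{2m}^1]=R_{2m}$, so any subgroup of $B^1$ containing $\R_i^1$ with finite index generates an order containing $\mathcal O_F$ locally, and Jun's uniqueness of the maximal order containing $\mathcal O_F$ then pins down the enveloping maximal order (Lemma \ref{lemma:lemma1}); the common overgroup $\Gamma\cap\rho(B^1)$ is thereby trapped inside both $\rho(\R_0^1)$ and $g\rho(\Sc_0^1)g^{-1}$, and a minimality argument using the J{\o}rgensen--Thurston well-ordering of hyperbolic volumes forces $\rho(\R_0^1)=g\rho(\Sc_0^1)g^{-1}$, contradicting the non-isometry of $M_0$ and $N_0$. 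If you want to salvage your route, you would need to replace the Eichler-order tree analysis with an analysis of how the groups $R_{2m}^1$ sit inside normalizers of Eichler orders, which in effect reproduces the paper's Lemma \ref{lemma:lemma1}.
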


Chen \cite{Chen} has shown that Vigneras' two and three dimensional examples of isospectral but not isometric hyperbolic manifolds cannot arise from Sunada's method. Chen's proof relies on the fact that Vigneras' manifolds are obtained from maximal orders in quaternion algebras. We are able to extend Chen's argument to the non-maximal orders contained in our chains of quaternion orders by employing J\o rgensen  and Thurston's proof \cite{thurston,meyerhoff} that the set of volumes of hyperbolic $3$-orbifolds comprise a well-ordered subset of the non-negative real numbers.

It is a pleasure to thank Peter Doyle, Carolyn Gordon, Tom Shemanske and David Webb for interesting and valuable conversations regarding various aspects of this paper.

\section{Genera of quaternion orders}\label{section:quaternionalgebraprelims}

We begin with some facts concerning genera of quaternion orders. Most of these facts are well-known and can be found in \cite{vigneras,mac-reid-book}; for proofs of the others, see Section 3 of \cite{linowitz}.

Let $K$ be a number field and $B/K$ a quaternion algebra. We denote by $\ram(B)$ the set of primes at which $B$ ramifies, by $\ram_f(B)$ the set of finite primes of $\ram(B)$ and by $\ram_\infty(B)$ the set of archimedean primes of $\ram(B)$. If $R$ is a subring of $B$ then we denote by $R^1$ the multiplicative group consisting of the elements of $R$ having reduced norm one. For a prime $\nu$ of $K$ (finite or infinite) we denote by $B_\nu$ the quaternion algebra $B\otimes_K K_\nu$. Let $\R$ be an $\mathcal O_K$-order of $B$. If $\nu$ is a finite prime of $K$, denote by $\R_\nu$ the $\mathcal O_{K_\nu}$-order $\R\otimes_{\mathcal O_K} \mathcal{O}_{K_\nu}$ of $B_\nu$. 

Two orders $\R,\Sc$ of $B$ are said to lie in the same genus if $\R_\nu\cong\Sc_\nu$ for all finite primes $\nu$ of $K$. The genus of $\R$ is the disjoint union of isomorphism classes and it is known that the number of isomorphism classes in the genus of $\R$ is finite (and is in fact a power of $2$ whenever there exists an archimedean prime of $K$ not lying in $\ram_\infty(B)$). When $\R$ is a maximal order of $B$, the number of isomorphism classes in the genus of $\R$ is called the type number of $B$.

The genus of an order $\R$ is profitably characterized adelically. Let $J_K$ be the idele group of $K$ and $J_B$ the idele group of $B$. The genus of $\R$ is characterized by the coset space $J_B/\mathfrak N(\R)$ where $\mathfrak N(\R)=J_B\cap \prod_\nu N(\R_\nu)$ and $N(\R_\nu)$ is the normalizer of $\R_\nu$ in $B_\nu^\times$. The isomorphism classes in the genus of $\R$ then correspond to double cosets of $B^\times\bs J_B / \mathfrak N(\R)$. This correspondence is given as follows. Suppose that $\Sc$ is in the genus of $\R$ and let $\tilde{x}=(x_\nu)\in J_B$ be such that $x_\nu\Sc_\nu x_\nu^{-1}=\R_\nu$. Then the isomorphism class of $\Sc$ corresponds to the double coset $B^\times \tilde{x} \mathfrak N(\R)$. 

The reduced norm induces a bijection $n: B^\times\bs J_B / \mathfrak N(\R)\rightarrow K^\times\bs J_K / n(\mathfrak N(\R))$. Define $H_\R=K^\times n(\mathfrak N(\R))$ and $ G_\R=J_K / H_\R$. As $J_K$ is abelian there is an isomorphism $ G_\R\cong K^\times\bs J_K / n(\mathfrak N(\R))$. The group $H_\R$ is an open subgroup of $J_K$ having finite index, so that by class field theory there exists a class field $K(\R)$ associated to it. The extension $K(\R)/K$ is an abelian extension of exponent $2$ whose conductor is divisible only by the primes dividing the level ideal of $\R$ (that is, the primes $\nu$ such that $\R_\nu$ is not a maximal order of $B_\nu$) and whose Galois group is isomorphic to $G_\R$. In the case that $\R$ is maximal we will often write $K(B)$ in place of $K(\R)$. Note that $K(B)$ can be characterized as the maximal abelian extension of $K$ of exponent $2$ which is unramified outside the real places of $\ram(B)$ and in which all primes of $\ram_f(B)$ split completely \cite[page 39]{Chinburg-Friedman}.

Given orders $\Sc,\mathcal T$ lying in the genus of $\R$, define the $G_\R$\textit{-valued} \textit{distance idele} $\rho_{\R}(\Sc,\mathcal T)$ as follows. Let $\tilde{x}_\Sc,\tilde{x}_{\mathcal T}\in J_B$ be such that ${x_\Sc}_\nu \Sc_\nu {x_\Sc}_\nu^{-1}=\R_\nu$ and ${x_\mathcal T}_\nu \mathcal T_\nu {x_\mathcal T}_\nu^{-1}=\R_\nu$ for all $\nu$. We define $\rho_{\R}(\Sc,\mathcal T)$ to be the coset $n(\tilde{x}_\Sc^{-1}\tilde{x}_\mathcal T)H_\R$ in $G_\R$. The function $\rho_{\R}(-,-)$ is well-defined and has the property that $\rho_{\R}(\Sc,\mathcal T)$ is trivial if and only if $\Sc\cong \mathcal T$ \cite[Prop 3.6]{linowitz}.

\section{Arithmetic groups derived from quaternion algebras}\label{section:arithmeticgroups}

In this section we recall some facts about arithmetic groups derived from orders in quaternion algebras; see \cite{mac-reid-book} for proofs.

Let $K$ be a number field with $r_1$ real primes and $r_2$ complex primes so that $[K:\mathbb Q]=r_1+2r_2$, and denote by $V_\infty$ the set of archimedean primes of $K$. Let $B/K$ a quaternion algebra and consider the isomorphism $$B\otimes_\mathbb Q \mathbb R \cong \mathbb H^r \times M_2(\mathbb R)^s \times M_2(\mathbb C)^{r_2}\qquad (r+s=r_1).$$

There exists an embedding $$\rho: B^\times \hookrightarrow \prod_{\nu\in V_\infty \setminus\ram_{\infty}(B)} B_\nu^\times \longrightarrow G = GL_2(\mathbb R)^s \times GL_2(\mathbb C)^{r_2}.$$ Restricting $\rho$ to $B^1$ gives an embedding $\rho: B^1 \hookrightarrow G^1=SL_2(\mathbb R)^s \times SL_2(\mathbb C)^{r_2}$. Let $N$ be a maximal compact subgroup of $G^1$ so that $X:=G^1/N=\textbf{H}_2^s\textbf{H}_3^{r_2}$ is a product of two and three dimensional hyperbolic spaces.

If $\R$ is an order of $B$ then $\rho(\R^1)$ is a discrete subgroup of $G^1$ of finite covolume. The orbifold $\rho(\R^1)\bs X$ is compact if and only if $B$ is a division algebra and will be a manifold if $\rho(\R^1)$ contains no elliptic elements.

\section{Isospectral towers and chains of quaternion orders}\label{section:towersandchains}

As was the case with Vigneras' \cite{vigneras} examples of Riemannian manifolds which are isospectral but not isometric, our construction will make use of the theory of orders in quaternion algebras. In particular, our isospectral towers will arise from chains of quaternion orders.

\begin{theorem}\label{theorem:chainsexist}
Let $K$ be a number field and $B/K$ a quaternion algebra of type number $t\geq 2$. Then there exist infinitely many families of chains of quaternion orders $\{ \{\R_i^j\} : 1\leq j\leq t\}$ of $B$ satisfying:

	\begin{enumerate}
		\item For all $i\geq 0$ and $1\leq j\leq t$, $\R^j_0$ is a maximal order and $\R^j_{i+1}\subsetneq \R^j_i$.
		\item For all $i\geq 0$ and $1\leq j_1 < j_2\leq t$, $\R^{j_1}_i$ and $\R^{j_2}_i$ are in the same genus but are not conjugate.
		\item For all $i\geq 0$ and $1\leq j\leq t$, there is an equality of class fields $K(\R^j_i)=K(B)$.
	\end{enumerate}

\end{theorem}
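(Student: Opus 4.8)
The plan is to construct every chain out of Eichler orders lying inside fixed representatives of the $t$ conjugacy classes of maximal orders of $B$, using a single auxiliary prime to descend, and then to obtain infinitely many families by varying that prime. Fix maximal orders $\mathcal{M}^1,\dots,\mathcal{M}^t$ of $B$, one in each conjugacy class (these exist because the type number of $B$ is $t$), noting that they all lie in the single genus of maximal orders. Fix a finite prime $\mathfrak{p}$ of $K$ with $\mathfrak{p}\notin\ram_f(B)$, so that $B_\mathfrak{p}\cong M_2(K_\mathfrak{p})$ and each $(\mathcal{M}^j)_\mathfrak{p}$ is conjugate in $B_\mathfrak{p}^\times$ to $M_2(\mathcal{O}_{K_\mathfrak{p}})$, say $(\mathcal{M}^j)_\mathfrak{p}=g_j M_2(\mathcal{O}_{K_\mathfrak{p}})g_j^{-1}$. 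For $i\geq0$ let $E_{2i}\subseteq M_2(\mathcal{O}_{K_\mathfrak{p}})$ be the standard Eichler order of level $\mathfrak{p}^{2i}$ and define $\R_i^j$ to be the $\Ok$-order of $B$ with $(\R_i^j)_\mathfrak{p}=g_jE_{2i}g_j^{-1}$ and $(\R_i^j)_\nu=(\mathcal{M}^j)_\nu$ for all $\nu\neq\mathfrak{p}$. Since $E_{2(i+1)}\subsetneq E_{2i}\subsetneq M_2(\mathcal{O}_{K_\mathfrak{p}})$, property (1) is immediate: $\R_0^j=\mathcal{M}^j$ and $\R_{i+1}^j\subsetneq\R_i^j$. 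For a fixed $i$ the orders $\R_i^1,\dots,\R_i^t$ are Eichler orders of the common level $\mathfrak{p}^{2i}$, hence locally conjugate at every finite prime, so they lie in one genus; this is the first assertion of (2).

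For property (3) it suffices to show $n(\mathfrak N(\R_i^j))=n(\mathfrak N(\mathcal{M}^j))$, for then $H_{\R_i^j}=H_{\mathcal{M}^j}$ and hence $K(\R_i^j)=K(\mathcal{M}^j)=K(B)$. Since the local normalizers coincide at every $\nu\neq\mathfrak{p}$, the only thing to verify is that the reduced norm of the normalizer in $B_\mathfrak{p}^\times$ of an Eichler order of \emph{even} level $\mathfrak{p}^{2i}$ equals that of a maximal order, namely $\mathcal{O}_{K_\mathfrak{p}}^\times(K_\mathfrak{p}^\times)^2$. This follows from the standard structure of the normalizer of $E_{2i}$: modulo $K_\mathfrak{p}^\times E_{2i}^\times$ it is generated by the Atkin--Lehner element $w=\left(\begin{smallmatrix}0&1\\ \pi^{2i}&0\end{smallmatrix}\right)$, and $n(w)=-\pi^{2i}$ already lies in $\mathcal{O}_{K_\mathfrak{p}}^\times(K_\mathfrak{p}^\times)^2$ precisely because $2i$ is even.

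The crux is the non-conjugacy in (2). The key observation is that an Eichler order $\R$ of level $\mathfrak{p}^{2i}$ of the above shape is the intersection of exactly two maximal orders of $B$. Indeed, a maximal order contains $\R$ if and only if it agrees with $\R$ at every $\nu\neq\mathfrak{p}$ and its $\mathfrak{p}$-component is one of the $2i+1$ vertices on a geodesic segment in the Bruhat--Tits tree of $B_\mathfrak{p}^\times$; two such maximal orders intersect in $\R$ exactly when their $\mathfrak{p}$-components are the two endpoints of the segment, any other pair meeting in an Eichler order of strictly smaller level. Thus $\R$ intrinsically determines an unordered pair $\{\mathcal{M},\mathcal{M}'\}$ of maximal orders. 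For $\R=\R_i^j$ one endpoint is $\mathcal{M}^j$, and the other, $\mathcal{M}'$, differs from $\mathcal{M}^j$ only at $\mathfrak{p}$, where its component is that of $\mathcal{M}^j$ conjugated by $\diag(1,\pi_\mathfrak{p}^{2i})$; since $\pi_\mathfrak{p}^{2i}\in(K_\mathfrak{p}^\times)^2\subseteq n(\mathfrak N(\mathcal{M}^j))$, the adelic description of genera in Section \ref{section:quaternionalgebraprelims} shows $\mathcal{M}'\cong\mathcal{M}^j$. Now if some $g\in B^\times$ conjugated $\R_i^{j_1}$ onto $\R_i^{j_2}$ with $j_1\neq j_2$, then $g$ would carry the pair of maximal orders attached to $\R_i^{j_1}$ onto that attached to $\R_i^{j_2}$, so $\mathcal{M}^{j_1}$ would be conjugate to one of the two endpoints of $\R_i^{j_2}$ --- each of which is conjugate to $\mathcal{M}^{j_2}$ --- contradicting the choice of $\mathcal{M}^1,\dots,\mathcal{M}^t$ as pairwise non-conjugate.

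Finally, to obtain infinitely many families one carries out the construction separately for each finite prime $\mathfrak{p}\notin\ram_f(B)$. Conjugate orders have the same level ideal, so for distinct primes the orders at stage $1$ are non-conjugate, and hence the resulting families of chains are genuinely distinct; since there are infinitely many such primes, this completes the proof. I expect the main obstacle to be the intrinsic recovery of the pair of maximal orders from an Eichler order of even level, together with the parity bookkeeping that forces both endpoints into the conjugacy class of $\mathcal{M}^j$; by contrast, the normalizer computation behind (3) and the tree combinatorics used above are routine.
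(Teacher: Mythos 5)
Your proof is correct, but it takes a genuinely different route from the paper. The paper does not use Eichler orders: it uses Jun's orders $R_{2m}(F)=\mathcal O_F+\xi\pi_F^m\mathcal O_F$ built from the unramified quadratic extension $F$ of the local field, placed at $\ell$ primes $\mu_1,\dots,\mu_\ell$ chosen by Chebotarev so that their Frobenius elements generate $\Gal(K(B)/K)\cong(\ZZ/2\ZZ)^\ell$ (writing $t=2^\ell$); the $t$ isomorphism classes at each stage are realized by local conjugation by powers of $\diag(\pi_{\mu_j},1)$ and distinguished by computing the distance idele $\rho_{\R}(-,-)$ and applying Artin reciprocity, while property (3) comes from $n(N(R_{2m}))=\mathcal O_k^\times(k^\times)^2$. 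You instead realize the $t$ classes by seating Eichler orders of even level $\mathfrak p^{2i}$ inside the $t$ pairwise non-conjugate maximal orders and distinguish them by the intrinsic unordered pair of maximal orders whose intersection is the Eichler order, with the parity of the level forcing both endpoints into a single conjugacy class; your normalizer computation via the Atkin--Lehner element, the tree argument, and the variation of $\mathfrak p$ to get infinitely many families are all sound, and your bookkeeping is arguably cleaner since it works for any $t$ without choosing generators of the Galois group. What the paper's choice buys is not visible in Theorem \ref{theorem:chainsexist} itself but in Section \ref{section:notsunada}: the non-Sunada argument (Lemma \ref{lemma:lemma1}) relies on Proposition \ref{proposition:norm1} ($\mathcal O_k[R_{2m}^1]=R_{2m}$) and on the fact that $R_{2m}$ contains $\mathcal O_F$ and hence lies in a \emph{unique} maximal order; an Eichler order of level $\mathfrak p^{2i}$ is contained in $2i+1$ maximal orders locally, so that argument would break for your chains. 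As a proof of the theorem as stated, however, your construction is complete.
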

	
Before proving Theorem \ref{theorem:chainsexist}, we show that it implies our main result. We will make use of the following proposition.

\begin{proposition}\label{proposition:noselectivity}
Suppose that $\ram_f(B)$ is not empty and that there is an archimedean prime of $K$ not lying in $\ram_\infty(B)$. Let $L$ be a maximal subfield of $B$ and $\Omega\subset L$ a quadratic $\mathcal O_K$-order. If $\{ \{\R_i^j\} : 1\leq j\leq t\}$ is a family of chains of quaternion orders of $B$ satisfying conditions (1)-(3) of Theorem \ref{theorem:chainsexist}, then for every $i\geq 0$ and $1\leq j_1<j_2\leq t$, $\R_i^{j_1}$ admits an embedding of $\Omega$ if and only if $\R^{j_2}_i$ admits an embedding of $\Omega$.
\end{proposition}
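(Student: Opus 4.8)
The plan is to combine condition~(3) with the selectivity theorem for quaternion orders recorded in \cite[Section~3]{linowitz} (see also \cite{Chinburg-Friedman}); the two hypotheses of the proposition are precisely what is needed to force every order in the common genus of $\R_i^{j_1}$ and $\R_i^{j_2}$ to behave identically with respect to embeddings of $\Omega$.

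Fix $i\geq 0$. Since $\R_i^{j_1}$ and $\R_i^{j_2}$ lie in the same genus we have $(\R_i^{j_1})_\nu\cong(\R_i^{j_2})_\nu$ for every finite prime $\nu$ of $K$, so whether $\Omega$ embeds into \emph{some} order of the genus of $\R_i^{j}$ is a condition that does not depend on $j$. If $\Omega$ embeds into no order of the genus then it embeds into neither $\R_i^{j_1}$ nor $\R_i^{j_2}$ and we are done. Otherwise, by the selectivity theorem, either $\Omega$ embeds into \emph{every} order of the genus --- in which case $\R_i^{j_1}$ and $\R_i^{j_2}$ both admit an embedding of $\Omega$ --- or one is in the selective case, which can only occur when $L\subseteq K(\R_i^{j})$.

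It therefore suffices to show $L\not\subseteq K(\R_i^{j})$. By condition~(3), $K(\R_i^{j})=K(B)$, and $K(B)$ is the maximal abelian extension of $K$ of exponent $2$ that is unramified outside the real places of $\ram(B)$ and in which every prime of $\ram_f(B)$ splits completely. By hypothesis $\ram_f(B)\neq\emptyset$, so choose $\nu\in\ram_f(B)$. Since $L$ is a maximal subfield of $B$, the inclusion $L\hookrightarrow B$ induces an embedding of $L_\nu=L\otimes_K K_\nu$ into $B_\nu$, and $B_\nu$ is a division algebra because $\nu$ ramifies in $B$; hence $L_\nu$ has no zero divisors, i.e.\ $L_\nu$ is a field and $\nu$ is not split in $L$. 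This contradicts the fact that $\nu$ splits completely in $L$ whenever $L\subseteq K(B)$. Hence $L\not\subseteq K(\R_i^{j})$, the selective case is excluded, and $\Omega$ embeds into every order of the genus; in particular it embeds into both $\R_i^{j_1}$ and $\R_i^{j_2}$, which proves the proposition.

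The one point requiring care is the invocation of the selectivity theorem: the hypothesis that some archimedean prime of $K$ lies outside $\ram_\infty(B)$ is the Eichler condition, and it is exactly what guarantees that $K(\R_i^{j})$ is the finite abelian extension described in Section~\ref{section:quaternionalgebraprelims} and that local embeddability everywhere is equivalent to embeddability into some order of the genus, so that the ``embeds into none / embeds into all / selective'' trichotomy applies. Granting this, the remainder is class field theory applied to the distance idele, with the ramified prime $\nu$ supplying the obstruction to selectivity.
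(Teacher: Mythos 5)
Your proposal is correct and follows essentially the same route as the paper: show $L\not\subseteq K(B)=K(\R_i^j)$ using a prime of $\ram_f(B)$ (the paper cites Albert--Brauer--Hasse--Noether where you give the equivalent local division-algebra argument), and then invoke the selectivity theorem \cite[Theorem 5.7]{linowitz} to conclude that $\Omega$ embeds into every order of the genus once it embeds into one.
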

\begin{proof}
Our hypothesis that $\ram_f(B)$ be non-empty means that there exists a finite prime of $K$ which ramifies in $B$. Such a prime splits completely in $K(B)/K$ and thus in every subfield of $K(B)$ as well. It follows that $L\not\subset K(B)$, for otherwise would contradict the Albert-Brauer-Hasse-Noether theorem. Suppose now that there exists an $i\geq 0$ and $1\leq j_1\leq t$  such that $\R_i^{j_1}$ admits an embedding of $\Omega$. By condition (3), $L\not\subset K(\R_i)$, hence every order in the genus of $\R^{j_1}_i$ admits an embedding of $\Omega$ \cite[Theorem 5.7]{linowitz}. In particular, $\R^{j_2}_i$ admits an embedding of $\Omega$ for all $1\leq j_2\leq t$.\end{proof}

\begin{theorem}\label{theorem:main}
For all $m,n\geq 2$ there exist infinitely many families of isospectral towers of compact Riemannian $n$-manifolds, each family being of cardinality $m$.
\end{theorem}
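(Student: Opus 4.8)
The plan is to deduce Theorem~\ref{theorem:main} from Theorem~\ref{theorem:chainsexist} together with Proposition~\ref{proposition:noselectivity} and the classical trace-formula criterion (due to Vign\'eras) that two arithmetic quotients of the form $\rho(\R^1)\bs X$ and $\rho(\Sc^1)\bs X$ are isospectral whenever $\R$ and $\Sc$ lie in the same genus. First I would fix integers $m,n\geq 2$. To realize an $n$-dimensional product $X=\mathbf{H}_2^s\mathbf{H}_3^{r_2}$ with $n=2s+3r_2$, choose $s$ and $r_2$ appropriately (for $n$ even take $r_2=0$, $s=n/2$; for $n$ odd take $r_2=1$ and $s=(n-3)/2$), and then select a number field $K$ with the requisite number of real and complex places and a quaternion algebra $B/K$ that (i) is a division algebra, (ii) has $\ram_f(B)\neq\emptyset$, (iii) has an archimedean prime not in $\ram_\infty(B)$, so that the class field machinery of Section~\ref{section:quaternionalgebraprelims} applies and the genus is a union of a $2$-power number of conjugacy classes, and (iv) has type number $t\geq m$. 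Producing such a $B$ with arbitrarily large type number is a standard computation: the type number is the order of $G_{\R}$ for $\R$ maximal, i.e.\ of $\Gal(K(B)/K)$, and by choosing $K$ with enough ramified primes (or large $2$-rank of a suitable ray class group) one makes this group as large as desired; I would cite \cite{mac-reid-book} or \cite{vigneras} for this.

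Next, apply Theorem~\ref{theorem:chainsexist} to this $K$ and $B$ to obtain infinitely many families of chains $\{\{\R_i^j\}:1\le j\le t\}$ satisfying conditions (1)--(3), and restrict attention to the first $m$ indices $j=1,\dots,m$. For each $j$ set $M_i^j:=\rho((\R_i^j)^1)\bs X$. Condition (1) gives inclusions $(\R_{i+1}^j)^1\subset(\R_i^j)^1$ of finite index (finite index because both are arithmetic lattices of the same covolume type, or directly because $\R_{i+1}^j\subsetneq\R_i^j$ are both orders), hence finite covering maps $M_{i+1}^j\to M_i^j$, so each $\{M_i^j\}_{i\ge0}$ is a tower; passing to a further congruence-type subgroup if necessary (as in Vign\'eras) one arranges that the groups are torsion-free so that the $M_i^j$ are genuine manifolds, and $B$ being a division algebra makes them compact. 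By condition (2), $\R_i^{j_1}$ and $\R_i^{j_2}$ lie in the same genus, so the arithmetic groups $(\R_i^{j_1})^1$ and $(\R_i^{j_2})^1$ are locally conjugate; Vign\'eras' trace-formula argument then shows $M_i^{j_1}$ and $M_i^{j_2}$ are isospectral.

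It remains to see that $M_i^{j_1}$ and $M_i^{j_2}$ are \emph{not} isometric when $j_1\neq j_2$. By condition (2) the orders $\R_i^{j_1},\R_i^{j_2}$ are not conjugate, equivalently $\rho_{\R_i^{j_1}}(\R_i^{j_1},\R_i^{j_2})$ is nontrivial; one translates this into the statement that the lattices $\rho((\R_i^{j_1})^1)$ and $\rho((\R_i^{j_2})^1)$ are not conjugate in the isometry group of $X$. An isometry between the quotients would, by Mostow--Margulis-type rigidity in dimension $\ge3$ (and by a separate commensurability/trace-field argument, or explicit geodesic-length-set comparison, in dimension $2$), force such a conjugacy, a contradiction. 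This is the step I expect to be the main obstacle: non-conjugacy of the orders must be promoted to non-isometry of the quotient orbifolds, and one has to rule out the possibility that an isometry is induced by an outer automorphism of $B$ or by an element normalizing the order but not lying in $B^\times$. Here I would use Proposition~\ref{proposition:noselectivity}: it guarantees that the two orders embed exactly the same quadratic $\mathcal O_K$-orders $\Omega$, hence the same maximal orders, so the potential obstruction coming from ``selectivity'' does not arise and the distance-idele invariant $\rho_{\R}(-,-)$ is a genuine obstruction to conjugacy that survives in the geometric setting. Finally, since Theorem~\ref{theorem:chainsexist} furnishes infinitely many such families of chains (distinguished, e.g., by their level ideals) and each yields a family of $m$ isospectral towers, and since distinct families of chains give non-isometric towers, we obtain infinitely many families of isospectral towers of compact Riemannian $n$-manifolds of cardinality $m$, completing the proof. $\qquad\blacksquare$
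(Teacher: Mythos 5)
Your overall strategy is the paper's: combine Theorem \ref{theorem:chainsexist} with Vign\'eras' trace-formula machinery and Proposition \ref{proposition:noselectivity}. But the key analytic step is handled incorrectly. You assert that because $\R_i^{j_1}$ and $\R_i^{j_2}$ lie in the same genus, the groups are ``locally conjugate'' and isospectrality follows from the trace formula. That inference is exactly where selectivity can intervene. The actual criterion, \cite[Theorem 6]{vigneras-isospectral}, requires that $(\R_i^{j_1})^1$ and $(\R_i^{j_2})^1$ have the same number of conjugacy classes of elements of each reduced trace; this reduces to counting optimal embeddings of quadratic orders $\Omega=\mathcal O_K[h]$ into the two quaternion orders. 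The adelic argument shows the counts agree \emph{when both are nonzero}, but a quadratic order may embed into one member of a genus and not another. Proposition \ref{proposition:noselectivity} (which needs $\ram_f(B)\neq\emptyset$ and condition (3) of Theorem \ref{theorem:chainsexist}) exists precisely to rule this out, and it belongs here, in the isospectrality argument. You instead deploy it in the non-isometry step, where it does no work: the two orders admitting the same embeddings makes them look more alike, not less, and has nothing to do with promoting non-conjugacy of orders to non-isometry of quotients.

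Two further gaps. For non-isometry, which you correctly flag as the main obstacle but leave unresolved, the missing hypothesis is one you did not impose on $B$: that its only $\Q$-automorphisms are inner. This is part of \cite[Theorem 8]{vigneras-isospectral}, and together with \cite[Theorem 4.3.5]{vigneras} it converts condition (2) of Theorem \ref{theorem:chainsexist} (same genus, not conjugate) into non-isometry of $M_i^{j_1}$ and $M_i^{j_2}$; without it the outer-automorphism scenario you worry about is not excluded. Finally, passing to ``a further congruence-type subgroup'' to kill torsion would break the construction: the isospectrality comparison is between the specific groups $(\R_i^{j_1})^1$ and $(\R_i^{j_2})^1$, and there is no canonical matching of congruence subgroups across the two orders that preserves the trace-by-trace count of conjugacy classes. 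The correct move, again from \cite[Theorem 8]{vigneras-isospectral}, is to choose $K$ and $B$ at the outset so that $\rho(\mathcal O^1)$ has no elliptic elements for \emph{every} order $\mathcal O$ of $B$, so all quotients in all towers are already manifolds.
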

\begin{proof}
Fix integers $m,n\geq 2$. Vigneras \cite[Theorem 8]{vigneras-isospectral} proved the existence of a number field $K$ and a quaternion algebra $B/K$ with the following properties:
\begin{itemize}
	\item $\ram_f(B)$ is nonempty
	\item The type number of $B$ is at least $2$.
	\item The only $\Q$-automorphisms of $B$ are inner automorphisms.
	\item If $\mathcal O$ is an order in $B$ then $\rho(\mathcal O^1)\backslash \textbf{H}_2^s\textbf{H}_3^{r_2}$ is a Riemannian $n$-manifold.
\end{itemize}

The proof of Vigneras' theorem is easily modified to construct quaternion algebras $B$ having arbitrarily large type number. In particular we will assume that $B$ satisfies the above properties and has type number greater than or equal to $m$. Because the type number of $B$ is a power of $2$, we may assume without loss of generality that the type number $t$ of $B$ is equal to $m$ and write $t=m=2^\ell$.

Let $\{ \{\R_i^j\} : 1\leq j\leq t\}$ be a family of chains of quaternion orders as in Theorem \ref{theorem:chainsexist}. For $1\leq j\leq t$, denote by $\{ M^j_i\}$ the infinite tower of finite covers of Riemannian $n$-manifolds associated to the $\{ \R^j_i\}$. By Theorem 4.3.5 of \cite{vigneras} and the discussion immediately afterwords, that condition (2) of Theorem \ref{theorem:chainsexist} is satisfied implies that $M^{j_1}_i$ and $M^{j_2}_i$ are not isometric for any $i\geq 0$ and $1\leq j_1<j_2\leq t$. 

It remains only to show that for all $i\geq 0$ and $1\leq j_1<j_2\leq t$, the manifolds $M^{j_1}_i$ and $M^{j_2}_i$ are isospectral. To ease notation set $\mathcal O_1=\R^{j_1}_i$ and $\mathcal O_2=\R^{j_2}_i$. By \cite[Theorem 6]{vigneras-isospectral}, it suffices to show that $\mathcal O_1^1$ and $\mathcal O_2^1$ have the same number of conjugacy classes of elements of a given reduced trace. If $h\in\mathcal O_1^1$, then the number of conjugacy classes of elements of $\mathcal O_1$ with reduced trace $\mbox{tr}(h)$ is equal to the number of embeddings of $\mathcal O_K[h]$ into $\mathcal O_1$ modulo the action of $\mathcal O_1^1$. It is well known that this quantity, when non-zero, is independent of the choice of order in the genus of $\mathcal O_1$. This can be proven as follows. Let $\Omega$ be a quadratic $\mathcal O_K$-order in a maximal subfield $L$ of $B$ and note that every embedding of $\Omega$ into $\mathcal O_1$ extends to an optimal embedding of some overorder $\Omega^*$ of $\Omega$ into $\mathcal O_1$. Conversely, if $\Omega^*$ is an overorder of $\Omega$, then every optimal embedding of $\Omega^*$ into $\mathcal O_1$ restricts to an embedding of $\Omega$ into $\mathcal O_1$. It therefore suffices to show that if $\mathcal O_2$ admits an embedding of $\Omega$, then the number of optimal embeddings of $\Omega$ into $\mathcal O_1$ modulo $\mathcal O_1^1$ is equal to the number of optimal embeddings of $\Omega$ into $\mathcal O_2$ modulo $\mathcal O_2^1$. This can be proven adelically using the same proof that Vigneras employed in the case of Eichler orders \cite[Theorem 3.5.15]{vigneras}.

By the previous paragraph, in order to prove that $M^{j_1}_i$ and $M^{j_2}_i$ are isospectral, it suffices to prove that if $\Omega$ is a quadratic order in a maximal subfield $L$ of $B$, then $\Omega$ embeds into $\R^{j_1}_i$ if and only if $\Omega$ embeds into $\R_i^{j_2}$. This follows from Proposition \ref{proposition:noselectivity}.\end{proof}

\section{Proof of Theorem \ref{theorem:chainsexist}}

\subsection{Orders in split quaternion algebras over local fields}
We begin by defining a family of orders in the quaternion algebra $M_2(k)$, $k$ a non-dyadic local field.

Let $k$ be a non-dyadic local field with ring of integers $\mathcal O_k$ and maximal ideal $P_k=\pi_k\mathcal{O}_k$. Let $F/k$ be the unramified quadratic field extension of $k$ with ring of integers $\mathcal O_F$ and maximal ideal $P_F=\pi_F\mathcal O_F$. We may choose $\pi_F$ so that $\pi_F=\pi_k$.

Let $\mathfrak B=M_2(k)$ and $\xi\in \mf B$ be such that $\mathfrak B = F + \xi F$ and $x\xi=\xi \overline x$ for all $x\in F.$ Let $m\geq 0$. Following Jun \cite{jun-primitive}, we define a family of orders in $\mathfrak B$: 

\begin{equation}\label{equation:definitionofjunorders}
R_{2m}(F)=\mathcal O_F + \xi \pi_F^m \mathcal O_F.
\end{equation}

For convenience we will denote $R_{2m}(F)$ by $R_{2m}$. It is easy to see that $R_0$ is a maximal order of $\mathfrak B$ and that we have inclusions: $$\cdots\subsetneq R_{2m+2}\subsetneq R_{2m}\subsetneq\cdots \subsetneq R_2\subsetneq R_0.$$

\begin{proposition}\label{proposition:normnormprop}
If $m\geq 1$, then $n(N(R_{2m}))=\mathcal{O}_k^\times {k^\times}^2$, where $N(R_{2m})$ is the normalizer of $R_{2m}$ in $\mathfrak B^\times$.\end{proposition}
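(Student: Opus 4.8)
The plan is to prove the two inclusions in $n(N(R_{2m}))=\mathcal O_k^\times{k^\times}^2$ separately; the inclusion $\supseteq$ is routine. The centre $k^\times$ of $\mathfrak B^\times$ lies in $N(R_{2m})$ and has reduced norm ${k^\times}^2$. Also $R_{2m}^\times\subseteq N(R_{2m})$, and since $\mathcal O_F\subseteq R_{2m}$ we have $\mathcal O_F^\times\subseteq R_{2m}^\times$; on $\mathcal O_F^\times$ the reduced norm is the norm $N_{F/k}$, which surjects onto $\mathcal O_k^\times$ because $F/k$ is unramified. As $n(N(R_{2m}))$ is a subgroup of $k^\times$, it therefore contains $\mathcal O_k^\times{k^\times}^2$.

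For the inclusion $\subseteq$, the key fact I will establish is that $R_0$ (the order \eqref{equation:definitionofjunorders} with $m=0$) is the \emph{unique} maximal order of $\mathfrak B$ containing $R_{2m}$. Granting this, if $g\in N(R_{2m})$ then conjugation by $g$ is an inner automorphism of $\mathfrak B$ permuting the maximal orders that contain $gR_{2m}g^{-1}=R_{2m}$; as there is only one such, $gR_0g^{-1}=R_0$, so $N(R_{2m})\subseteq N(R_0)$. Since $R_0$ is conjugate to $M_2(\mathcal O_k)$, whose normalizer in $\GL_2(k)$ is $k^\times\GL_2(\mathcal O_k)$, and the reduced norm is invariant under conjugation, we get $n(N(R_{2m}))\subseteq n(N(R_0))=n(k^\times\GL_2(\mathcal O_k))=\mathcal O_k^\times{k^\times}^2$; combining this with the previous paragraph proves the proposition.

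To prove the uniqueness, identify $\mathfrak B=\End_k(F)$, so that the maximal orders of $\mathfrak B$ are precisely the rings $\End_{\mathcal O_k}(L)$ for $\mathcal O_k$-lattices $L\subset F$, with $\End_{\mathcal O_k}(L)=\End_{\mathcal O_k}(L')$ exactly when $L$ and $L'$ are homothetic, and with $\mathcal O_F\subseteq\End_{\mathcal O_k}(L)$ exactly when $L$ is an $\mathcal O_F$-submodule of $F$. A maximal order containing $R_{2m}$ contains $\mathcal O_F$ (because $\mathcal O_F\subseteq R_{2m}$), so the corresponding lattice is an $\mathcal O_F$-module; but a full $\mathcal O_k$-lattice in $F$ that is an $\mathcal O_F$-module is a fractional $\mathcal O_F$-ideal — here it is essential that $F/k$ is a field, i.e. inert — hence of the form $\pi_F^{\,j}\mathcal O_F$, and all of these lie in a single homothety class. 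Thus $\mathcal O_F$, and therefore $R_{2m}$, is contained in exactly one maximal order, which must be $R_0$ since $R_{2m}\subseteq R_0$.

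I expect this uniqueness assertion to be the crux of the argument: it is what forces the non-Eichler orders $R_{2m}$ to behave, as far as normalizers and reduced norms are concerned, like the maximal order $R_0$, and it depends essentially on $F$ being inert rather than split (for an Eichler order the embedded quadratic algebra is split, the set of maximal orders containing it is an entire path in the Bruhat--Tits tree of $\GL_2(k)$, and the reduced norm of its normalizer is strictly larger than $\mathcal O_k^\times{k^\times}^2$). The remaining ingredients — surjectivity of $N_{F/k}$ on units and the description of the normalizer of a maximal order of $M_2(k)$ — are standard.
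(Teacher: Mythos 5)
Your proof is correct, but the route to the key inclusion $N(R_{2m})\subseteq N(R_0)$ is genuinely different from the paper's. The paper argues directly: for $x\in N(R_{2m})$ it shows $x\xi\mathcal O_F x^{-1}\subseteq \pi_F^{-m}\mathcal O_F+\xi\mathcal O_F$, and then an integrality argument on reduced norms (elements of $x\xi\mathcal O_F x^{-1}$ are integral, so the $\pi_F^{-m}a$ component forces $a\in P_F^m$) pins $x\xi\mathcal O_F x^{-1}$ inside $R_0$, whence $xR_0x^{-1}\subseteq R_0$. You instead invoke the lattice description of maximal orders of $M_2(k)$ to show that $R_0$ is the \emph{unique} maximal order containing $\mathcal O_F$ (a full $\mathcal O_k$-lattice in $F$ stable under $\mathcal O_F$ is a fractional $\mathcal O_F$-ideal, and all of these are homothetic), so that any normalizing element of $R_{2m}$ must fix $R_0$. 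Your uniqueness fact is exactly Proposition 2.2 of Jun's paper, which the author does use later (in the proof of Lemma \ref{lemma:lemma1}), so your argument is arguably the more conceptual and unifying one, at the cost of importing the lattice/homothety dictionary; the paper's computation is more elementary and self-contained. Both proofs agree on the easy inclusion $\supseteq$. One small caveat on your closing aside: for an Eichler order of level $P^n$ the reduced norm of the normalizer is $\mathcal O_k^\times{k^\times}^2\langle\pi^n\rangle$, which strictly exceeds $\mathcal O_k^\times{k^\times}^2$ only when $n$ is odd; this does not affect your proof, which nowhere relies on that remark.
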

\begin{proof} Suppose that $x\in N(R_{2m})$. Then $x\mathcal O_F x^{-1}\subseteq R_{2m}\subseteq R_0$ because $\mathcal O_F$ is contained in $R_{2m}$. Similarly, $x \pi_F^m \xi \mathcal O_F x^{-1}\subseteq R_{2m}$. Because $\pi_F=\pi_k$ is in the center of $\mathfrak B$, it follows that $x\xi\mathcal O_F x^{-1}\subseteq \pi_F^{-m}R_{2m}=\pi_F^{-m}\mathcal O_F + \xi\mathcal{O}_F$. 

We may write an arbitrary element of $x\xi\mathcal O_F x^{-1}$ as $\pi_F^{-m}a+\xi b$, where $a,b\in\mathcal O_F$. It is straightforward to show that $n(\pi_F^{-m}a+\xi b)=n(\pi_F^{-m}a)-n(b).$ As $b$ is an element of $\mathcal{O}_F$ we see that $n(b)$ is integral. Similarly, $n(\pi_F^{-m}a+\xi b)$ must be integral. This follows from observing that $\pi_F^{-m}a+\xi b\in x \xi \mathcal O_F x^{-1}$ is integral since every element of $\xi\mathcal O_F$ is integral and conjugation preserves integrality. Therefore $n(\pi_F^{-m}a)$ is integral, hence $a\in P_F^m$. We have shown that every element of $x\xi\mathcal O_F x^{-1}$ lies in $\mathcal O_F + \xi \mathcal O_F = R_0$. 
	
If $x\in N(R_{2m})$ then $x\mathcal O_F x^{-1}, x\xi\mathcal O_F x^{-1}\subseteq R_0$. Therefore $x R_0x^{-1}=x\left( \mathcal O_F + \xi\mathcal O_F\right)x^{-1}\subseteq R_0.$ Equivalently, $x\in N(R_0)$ hence $N(R_{2m})\subseteq N(R_0)$. Recalling that $R_0$ is conjugate to $M_2(\mathcal O_k)$, whose normalizer is $GL_2(\mathcal O_k) k^\times$, we deduce that $n(N(R_{2m}))\subseteq \mathcal O_k^\times {k^\times}^2$. To show the reverse inclusion we note that both $\mathcal O_F^\times$ and $k^\times$ lie in $N(R_{2m})$ and that $n(\mathcal O_F^\times k^\times)=\mathcal O_k^\times {k^\times}^2$ (since $F/k$ is unramified).\end{proof}

\begin{proposition}\label{proposition:norm1}
If $\mathcal O=\mathcal O_k[R_{2m}^1]$ is the ring generated over $\mathcal O_k$ by $R_{2m}^1$, then $\mathcal O=R_{2m}$.
\end{proposition}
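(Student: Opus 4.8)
The inclusion $\mathcal O_k[R_{2m}^1]\subseteq R_{2m}$ is immediate, since $R_{2m}$ is an $\mathcal O_k$-algebra containing $R_{2m}^1$; the content is the reverse inclusion. The plan is to prove this by showing separately that $\mathcal O_F\subseteq \mathcal O_k[R_{2m}^1]$ and that $\xi\pi_F^m\mathcal O_F\subseteq \mathcal O_k[R_{2m}^1]$. Since $R_{2m}=\mathcal O_F+\xi\pi_F^m\mathcal O_F$, these two inclusions give $R_{2m}\subseteq \mathcal O_k[R_{2m}^1]$, hence equality.

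For the first inclusion the idea is to exhibit a single $\mathcal O_k$-algebra generator of $\mathcal O_F$ that already has reduced norm $1$. Since $F/k$ is unramified with residue extension $\mathbb F_{q^2}/\mathbb F_q$, and $q+1$ is prime to the residue characteristic, $F$ contains a primitive $(q+1)$-st root of unity $\zeta$. Its reduction $\bar\zeta$ has multiplicative order $q+1$, which does not divide $q-1$, so $\bar\zeta$ generates $\mathbb F_{q^2}$ over $\mathbb F_q$; as an unramified extension is generated over the base by any element reducing to a generator of the residue field, $\mathcal O_k[\zeta]=\mathcal O_F$. On the other hand the nontrivial automorphism of $F/k$ sends $\zeta$ to $\zeta^q$, so $N_{F/k}(\zeta)=\zeta^{q+1}=1$, and since the reduced norm of an element of the maximal subfield $F$ equals its field norm, $\zeta\in R_{2m}^1$. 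Hence $\mathcal O_F=\mathcal O_k[\zeta]\subseteq\mathcal O_k[R_{2m}^1]$.

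For the second inclusion it suffices to produce one element $z\in R_{2m}^1$ of the form $z=a+\xi\pi_F^m$ with $a\in\mathcal O_F$: then $\xi\pi_F^m=z-a$ lies in $\mathcal O_k[R_{2m}^1]$ (using $a\in\mathcal O_F\subseteq\mathcal O_k[R_{2m}^1]$ from the previous step), and since $\mathcal O_k[R_{2m}^1]$ is a ring containing both $\xi\pi_F^m$ and all of $\mathcal O_F$, it contains $\xi\pi_F^m\mathcal O_F$. To find such a $z$, recall from the computation in the proof of Proposition \ref{proposition:normnormprop} that $n(a+\xi\pi_F^m)=N_{F/k}(a)-\xi^2\pi_F^{2m}$. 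After normalizing $\xi$ — which may be replaced by any $\mathcal O_F^\times$-multiple without changing the orders $R_{2m}$ — so that $\xi^2=1$, the condition $n(z)=1$ becomes $N_{F/k}(a)=1+\pi_F^{2m}$, a unit of $\mathcal O_k$; since $F/k$ is unramified, the norm map $\mathcal O_F^\times\to\mathcal O_k^\times$ is surjective, so such an $a$ exists, and $z=a+\xi\pi_F^m\in R_{2m}$ then has reduced norm $1$. The only point demanding care is precisely this last step, namely that $1+\xi^2\pi_F^{2m}$ be a norm from $\mathcal O_F^\times$, which is why I fix the normalization $\xi^2=1$; the remaining ingredients — monogenicity of unramified extensions, surjectivity of the norm on their unit groups, and the reduced-norm formula — are standard.
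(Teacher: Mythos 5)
Your proof is correct, but it takes a genuinely different route from the paper's. The paper works with the full unit group: it considers the map $f\colon R_{2m}^\times\to\mathcal O_k^\times/(\mathcal O_k^\times)^2$ induced by the reduced norm, notes that $\ker f$ is a union of cosets of $R_{2m}^1$ translated by $\mathcal O_k^\times$ and hence lies in $\mathcal O^\times$, produces an element of $\mathcal O^\times$ of non-square reduced norm, and concludes $\mathcal O^\times=R_{2m}^\times$ since the target of $f$ has order $2$; it then finishes by observing that $R_{2m}$ has an $\mathcal O_k$-basis consisting of units. You instead bypass the unit group entirely and exhibit explicit norm-one generators: a primitive $(q+1)$-st root of unity $\zeta$ with $\mathcal O_k[\zeta]=\mathcal O_F$ and $N_{F/k}(\zeta)=1$, and an element $a+\xi\pi_F^m$ of reduced norm one obtained from surjectivity of the norm on units of the unramified extension. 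Your version is more constructive and avoids the one slightly delicate step in the paper's argument (verifying that $\mathcal O^\times$ contains an element of non-square reduced norm); its only extra cost is the normalization $\xi^2=1$, which you correctly justify (and which the paper itself invokes, via the anti-diagonal matrix, in its final step), and which is genuinely needed in the $m=0$ case so that $1+\xi^2$ is a unit. Both arguments use the same underlying inputs — non-dyadicity of $k$ and surjectivity of $N_{F/k}$ on units — so neither is more general than the other.
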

\begin{proof}
Consider the map $f: R_{2m}^\times\rightarrow \mathcal O_{k}^\times/{\mathcal O_{k}^\times}^2$ induced by composing the reduced norm on $R_{2m}^\times$ with the projection $\mathcal O_{k}^\times\rightarrow\mathcal O_{k}^\times/{\mathcal O_{k}^\times}^2$. This map is surjective as $n(\mathcal O_F^\times)=\mathcal O_k^\times$ and $\mathcal O_F\subset R_{2m}$. The kernel of $f$ is the disjoint union of the cosets $g R_{2m}^1$ as $g$ varies over $\mathcal O_k^\times$. Because $k$ is a non-dyadic local field, $\mathcal O_{k}^\times/{\mathcal O_{k}^\times}^2$ has order $2$. As $\mathcal O^\times$ contains the kernel of $f$ as well as elements of non-square reduced norm (this follows from the fact that $\mathcal O^\times$ contains a $k$-basis for $\mf B$. Fix an element of $\mf B$ of non-square reduced norm and write it as a $k$-linear combination of this basis, then clear denominators by multiplying through by an element of $\mathcal O_k^2$. The resulting element will lie in $\mathcal O^\times$ and have a non-square reduced norm), $\mathcal O^\times=R_{2m}^\times$. Using equation (\ref{equation:definitionofjunorders}) and the observation that one may take as $\xi$ the matrix with zeros on the diagonal and ones on the anti-diagonal, it is easy to see that $R_{2m}$ has an $\mathcal O_k$-basis consisting entirely of elements of $R_{2m}^\times$, hence $R_{2m}\subset \mathcal O$. As the opposite inclusion is immediate, we conclude that $\mathcal O=R_{2m}$.
\end{proof}

\subsection{Proof of Theorem \ref{theorem:chainsexist}}\label{subsection:theconstruction}

Let notation be as above and assume that $B$ has type number $t=2^\ell$ with $\ell\geq 1$. Let $\R$ be a fixed maximal order of $B$. We will prove Theorem \ref{theorem:chainsexist} in the case that $\ell=2$ as the other cases are similar though in general more tedious in terms of notation.

Because the type number of $B$ is $4$ there is an isomorphism $\Gal(K(B)/K)\cong \ZZ/2\ZZ \times \ZZ/2\ZZ$. Let $\mu_1,\mu_2$ be primes of $K$ such that the Frobenius elements $(\mu_1, K(B)/K)$ and $(\mu_2, K(B)/K)$ generate $\Gal(K(B)/K)$. By the Chebotarev density theorem we may choose $\mu_1$ and $\mu_2$ from sets of primes of $K$ having positive density.

Let $\delta_i=\mbox{diag}(\pi_{K_{\mu_i}},1)$ and $F_i$ be the unramified quadratic field extension of $K_{\mu_i}$ (for $i=1,2$).

For any $i\geq 0$ and $0\leq a,b\leq 1$ define the orders $\R_i^{a,b}$ via:

\begin{displaymath}
({\R_i^{a,b}})_\nu = \left\{ \begin{array}{ll}
\R_\nu & \textrm{if $\nu\not\in \{\mu_1,\mu_2\}$}\\
\delta_1^a R_{2i}(F_1) \delta_1^{-a} & \textrm{if $\nu=\mu_1$}\\
\delta_2^b R_{2i}(F_2)\delta_2^{-b} & \textrm{if $\nu=\mu_2$}\\
\end{array} \right.
\end{displaymath}

We now set $\R_i^1=\R_i^{0,0},\R_i^2=\R_i^{1,0},\R_i^3=\R_i^{0,1}$ and $\R_i^4=\R_i^{1,1}$. 

Having constructed our chains $\{\R_i^1\},\{\R_i^2\},\{\R_i^3\},\{\R_i^4\}$, we show that they satisfy the required properties. That $\R^j_0$ is a maximal order and $\R^j_{i+1}\subsetneq \R^j_i$ for all $i\geq 0$ and $1\leq j\leq 4$ is immediate. We claim that $K(\R^j_i)=K(B)$ for all $i\geq 0$ and $1\leq j\leq 4$. Indeed, as $n(N({\R^j_i}_\nu))=n(N(\R_\nu))$ for all primes $\nu$ of $K$ (by Proposition \ref{proposition:normnormprop}), it is immediate that $H_{\R^j_i} = K^\times n(\mathfrak N(\R^j_i))$ is equal to $H_\R = K^\times n(\mathfrak N(\R))$. It follows that $K(\R^j_i)=K(\R)$ and because $K(\R)=K(B)$ by definition, we have proven our claim.

By construction $\R_i^1,\R_i^2,...,\R_i^4$ all lie in the same genus. That they represent distinct isomorphism classes can be proven by considering the $G_{\R^1_i}$-valued distance idele $\rho_{\R^1_i}(-,-)$. For instance, to show that $\R_i^1\not\cong\R_i^2$ note that $\rho_{\R_i^1}(\R_i^1,\R_i^2)=n(\tilde{x}_{\R^2_i})H_{\R^1_i}=e_{\mu_1} H_{\R^1_i}$, where $e_{\mu_1}=(1,...,1,\pi_{K_{\mu_1}},1,...)\in J_K$. This corresponds, under the Artin reciprocity map, to the element $(\mu_1, K(B)/K)\in\mbox{Gal}(K(B)/K)$, which is non-trivial by choice of $\mu_1$. This shows that properties (1)-(3) are satisfied, concluding our proof.

\section{The Sunada construction}\label{section:notsunada}

In this section we prove that in dimensions two and three the isospectral towers constructed in Theorem \ref{theorem:main} do not arise from Sunada's method \cite{Sunada}. Our proof follows Chen's proof \cite{Chen} that the two and three dimensional examples of isospectral but not isometric Riemannian manifolds that Vigneras constructed in \cite{vigneras-isospectral} do not arise from Sunada's construction, though several difficulties arise from our introduction of non-maximal quaternion orders.

We begin by reviewing Sunada's construction. Let $H$ be a finite group with subgroups $H_1$ and $H_2$. We say that $H_1$ and $H_2$ are \textit{almost conjugate} if for every $h\in H$,  $$\#([h]\cap H_1)=\#([h]\cap H_2), $$ where $[h]$ denotes the $H$-conjugacy class of $h$. In this case we call $(H,H_1,H_2)$ a \textit{Sunada triple}. 

Let $M_1$ and $M_2$ be Riemannian manifolds and $(H,H_1,H_2)$ a Sunada triple. If $M$ is a Riemannian manifold then as in Chen \cite{Chen} we say that $M_1$ and $M_2$ are \textit{sandwiched} between $M$ and $M/H$ with triple $(H,H_1,H_2)$ if there is an embedding of $H$ into the group of isometries of $M$ such that the projections $M\rightarrow M / H_i$ are Riemannian coverings and $M_i$ is isometric to $M / H_i$ (for $i=1,2$). The following theorem of Sunada \cite{Sunada} provides a versatile means of producing isospectral Riemannian manifolds.  

\begin{theorem}[Sunada]
If $M_1$ and $M_2$ are Riemannian manifolds which can be sandwiched with a triple $(H,H_1,H_2)$ then $M_1$ and $M_2$ are isospectral.
\end{theorem}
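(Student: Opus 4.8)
The plan is to establish the identity $Z_{M_1}(t)=Z_{M_2}(t)$ for all $t>0$, where $Z_N(t)=\sum_j e^{-\lambda_j(N)t}=\int_N K_N(t,x,x)\,dx$ is the heat trace of a closed Riemannian manifold $N$ and $K_N$ is its heat kernel; since $Z_N$ is the Laplace transform of the spectral measure of $N$, knowing it for all $t$ recovers the eigenvalues with multiplicities, so this suffices. By the definition of sandwiching we may take $M_i=M/H_i$, where $H\le\operatorname{Isom}(M)$ is finite and each $\pi_i\colon M\to M/H_i$ is a Riemannian covering with deck group $H_i$ (so $H_i$ acts freely on $M$).

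First I would record the standard lift of the heat kernel along the finite Riemannian covering $\pi_i$: for $x,y\in M$,
\[
K_{M_i}(t,\pi_i x,\pi_i y)=\sum_{h\in H_i}K_M(t,x,hy).
\]
(This follows from uniqueness of the heat kernel once one checks that the right-hand side is smooth, $H_i$-invariant in $y$, and solves the heat equation on $M_i$.) Putting $y=x$ and integrating the diagonal of $K_{M_i}$ over $M_i$---equivalently, integrating the $H_i$-invariant function $x\mapsto\sum_{h\in H_i}K_M(t,x,hx)$ over $M$ and dividing by $|H_i|$---gives
\[
Z_{M_i}(t)=\frac1{|H_i|}\sum_{h\in H_i}\Theta_h(t),\qquad \Theta_h(t):=\int_M K_M(t,x,hx)\,dx .
\]

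The crux is that $h\mapsto\Theta_h(t)$ is a class function on $H$: for $g\in H$ the substitution $x\mapsto gx$ together with $K_M(t,gx,gy)=K_M(t,x,y)$ (as $g$ is an isometry) gives $\Theta_h(t)=\Theta_{g^{-1}hg}(t)$. Grouping the sum over $H$-conjugacy classes $[h]$,
\[
Z_{M_i}(t)=\frac1{|H_i|}\sum_{[h]}\#\!\bigl([h]\cap H_i\bigr)\,\Theta_h(t),
\]
and since summing $\#([h]\cap H_1)=\#([h]\cap H_2)$ over all classes also yields $|H_1|=|H_2|$, the almost-conjugacy hypothesis forces $Z_{M_1}(t)=Z_{M_2}(t)$ for every $t$. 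As $M_1,M_2$ are isometric to $M/H_1,M/H_2$, they are isospectral.

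I do not anticipate a real obstacle: the argument is bookkeeping with conjugacy classes together with two routine facts about closed manifolds---the covering-space formula for the heat kernel and the fact that the heat trace determines the spectrum. If one prefers to avoid heat kernels, the statement is equally a piece of representation theory: almost conjugacy of $H_1,H_2$ is equivalent to an isomorphism $\C[H/H_1]\cong\C[H/H_2]$ of $\C[H]$-modules---compare permutation characters, the value at $h$ being $\tfrac{|C_H(h)|}{|H_i|}\#([h]\cap H_i)$---and since $L^2(M_i)\cong L^2(M)^{H_i}=\operatorname{Hom}_H\!\bigl(\C[H/H_i],L^2(M)\bigr)$ by an isometry intertwining the Laplacians, each $\Delta$-eigenvalue occurs with the same multiplicity on $M_1$ and on $M_2$.
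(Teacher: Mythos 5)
Your argument is correct. The paper does not prove this statement at all---it simply quotes it from Sunada's 1985 paper---so there is no internal proof to compare against; your heat-trace computation is essentially Sunada's original argument (and the representation-theoretic reformulation via $\C[H/H_1]\cong\C[H/H_2]$ you sketch at the end is the other standard proof). The only caveat worth recording is that the trace $Z_N(t)=\sum_j e^{-\lambda_j(N)t}$ and the recovery of the spectrum from it require $M$ (hence $M_1,M_2$) to be closed, which is implicit in the paper's setting of compact quotients; with that hypothesis made explicit, every step---the covering formula for the heat kernel, the class-function property of $\Theta_h$, the identity $|H_1|=|H_2|$ obtained by summing the almost-conjugacy condition over conjugacy classes---is sound.
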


The majority of the known examples of isospectral but not isometric Riemannian manifolds have been constructed by means of Sunada's theorem and its variants (see \cite{Gordon-SunadaSurvey} for a survey of work inspired by Sunada's theorem). We will show that in dimensions two and three, none of the manifolds produced in Theorem \ref{theorem:main} arise from Sunada's construction.

Let $\{ M_i \}, \{ N_i \}$ be isospectral towers of Riemannian $n$-manifolds as constructed in Theorem \ref{theorem:main}. Recall that these towers were constructed from chains $\{ \R_i\}$, $\{\Sc_i\}$ of  orders in a quaternion algebra $B$ defined over a number field $K$. These towers will consist of hyperbolic $2$-manifolds or hyperbolic $3$-manifolds if and only if $B$ is unramified at a unique archimedean prime of $K$.

\begin{theorem}\label{theorem:notsunada}
If $B$ is unramified at a unique archimedean prime of $K$ then for every $i\geq 0$ the isospectral hyperbolic manifolds $M_i$ and $N_i$ do not arise from Sunada's construction.
\end{theorem}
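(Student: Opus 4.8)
The plan is to follow Chen's strategy: assume for contradiction that $M_i$ and $N_i$ are sandwiched between a common Riemannian manifold $M$ and $M/H$ with Sunada triple $(H,H_1,H_2)$, and derive a contradiction with the arithmetic rigidity of the quaternion orders $\R_i$ and $\Sc_i$. First I would record that, since $M_i$ and $N_i$ are hyperbolic $2$- or $3$-manifolds of the form $\rho(\R_i^1)\bs X$ and $\rho(\Sc_i^1)\bs X$, any common cover $M$ is itself a hyperbolic orbifold $\rho(\Lambda)\bs X$ for a lattice $\Lambda$ containing (conjugates of) $\rho(\R_i^1)$ and $\rho(\Sc_i^1)$ as finite-index subgroups. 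In the three-dimensional case Mostow rigidity forces the relevant isometries to be realized inside $\mathrm{PGL}_2(\C)$ and hence, because the manifolds are arithmetic and derived from $B$, commensurable with the image of $B^\times$; in the two-dimensional case one uses instead that a hyperbolic surface group determines its commensurator as an arithmetic group. The upshot of this first step is that $M$ corresponds to an order (or at worst an Eichler-type suborder of the normalizer) $\mathcal{T}$ in the \emph{same} quaternion algebra $B$, and both $\R_i^1$ and $\Sc_i^1$ sit inside $\mathcal T$ up to conjugacy.

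Next I would extract the numerical constraint. Being sandwiched with triple $(H,H_1,H_2)$ forces $[\mathcal{T}^1 : \R_i^1]=\#H_1=\#H_2=[\mathcal{T}^1 : \Sc_i^1]$, so $\R_i^1$ and $\Sc_i^1$ have the same covolume, equivalently $\mathrm{vol}(M_i)=\mathrm{vol}(N_i)$ — which we already know — but more importantly the common index $\#H_1$ is bounded below by a quantity depending only on the \emph{covolume ratio} of $M$ to $M_i$. Here is where the new input over Chen's maximal-order case is needed: the orders $\R_i^j$ are non-maximal, with local level growing like $\pi_{K_{\mu}}^{2i}$ at the primes $\mu_1,\mu_2$, so the covolume of $M_i$ grows without bound as $i\to\infty$, and I must control how large a cover $M$ is forced to be. The tool for this is exactly the one flagged in the introduction: the set of volumes of hyperbolic $3$-orbifolds is well-ordered (Jørgensen–Thurston), so there is a \emph{minimal} volume among hyperbolic orbifolds commensurable with $M_i$ in the commensurability class determined by $B$; call its underlying lattice $\Lambda_{\min}$. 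Then $M$ must be covered by $\Lambda_{\min}\bs X$ (or $M$ itself is covered by it), which pins down $\mathcal T$ up to finitely many possibilities inside the genus/conjugacy data, all sharing the class field $K(B)$ by condition (3) of Theorem~\ref{theorem:chainsexist}.

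With $\mathcal T$ essentially pinned down, the contradiction comes from selectivity/embedding theory: the almost-conjugacy condition $\#([h]\cap H_1)=\#([h]\cap H_2)$ for all $h\in H$, translated through the covering $M\to M_i$, says that for every conjugacy class in $\mathcal T^1$ the number of representatives lying in $\R_i^1$ equals the number lying in $\Sc_i^1$ — but this is a \emph{stronger} statement than isospectrality (which only equates the \emph{total} number of conjugacy classes of given trace). Following Chen, one produces a primitive hyperbolic (or more generally semisimple) element $h\in\mathcal T^1$ generating a quadratic order $\Omega=\Ok[h]$ that is selective for the genus of $\R_i^1$: since $L\not\subset K(B)=K(\R_i^1)$ fails in the relevant way, one instead arranges $\Omega$ so that, after passing to the right overorder, $\Omega$ embeds into $\mathcal T$ but the local embedding numbers at $\mu_1,\mu_2$ distinguish $\R_i^1$ from $\Sc_i^1$ — which is possible precisely because $\R_i^1$ and $\Sc_i^1$, while in the same genus, are not conjugate (condition (2)). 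This yields $\#([h]\cap H_1)\neq\#([h]\cap H_2)$, contradicting the Sunada triple.

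The main obstacle is the middle step: bounding the common cover $M$. In Chen's setting the orders are maximal, so $M$ lies between two \emph{fixed} arithmetic manifolds and the commensurator is handled once and for all; here the tower forces $M$ to grow with $i$, and without the well-ordering of the volume spectrum there would be no \emph{a priori} bound ruling out an exotic large common cover that washes out the local distinction at $\mu_1,\mu_2$. Making the Jørgensen–Thurston well-ordering interact correctly with the adelic level structure of the $R_{2i}(F_j)$ — in particular checking that the minimal-volume orbifold in the commensurability class still has class field equal to $K(B)$ so that selectivity applies uniformly in $i$ — is the technical heart of the argument, and is where the two-dimensional case (which lacks Mostow rigidity and the $3$-orbifold volume well-ordering) needs a separate, Fuchsian-group-specific treatment.
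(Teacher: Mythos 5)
Your overall framing (assume a Sunada sandwich, follow Chen, invoke the J\o rgensen--Thurston well-ordering) matches the paper, but your endgame is not the paper's and, more importantly, it cannot work. You propose to contradict almost-conjugacy by producing a quadratic order $\Omega=\mathcal O_K[h]$ whose ``local embedding numbers at $\mu_1,\mu_2$ distinguish $\R_i^1$ from $\Sc_i^1$.'' No such $\Omega$ exists: Proposition \ref{proposition:noselectivity} and the embedding-number argument in the proof of Theorem \ref{theorem:main} show that because $\ram_f(B)\neq\emptyset$ forces $L\not\subset K(B)=K(\R_i^j)$, every quadratic order embeds into $\R_i^1$ if and only if it embeds into $\Sc_i^1$, and with the same number of optimal embeddings modulo units. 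This absence of selectivity is precisely what makes $M_i$ and $N_i$ isospectral in the first place, so the mechanism you want to exploit is exactly the one the construction has destroyed. The paper's contradiction is of a different nature: it is group-theoretic, not embedding-theoretic. One shows that the lattice $\Gamma$ with $M/H=\Gamma\bs X$ would have to satisfy $\rho(\R_0^1)=\Gamma\cap\rho(B^1)=g\rho(\Sc_0^1)g^{-1}$, which is impossible because $M_0$ and $N_0$ are not isometric; hence $\Gamma$ cannot contain $\rho(\R_i^1)$ with finite index and the covering $M\to M/H$ cannot be finite.

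The missing ingredient that makes this run is Lemma \ref{lemma:lemma1}: any subgroup $J$ of $B^1$ containing $\mathcal O^1$ with finite index, for $\mathcal O$ any order in the chain, must lie in $\R_0^1$. Its proof is where the non-maximality is actually handled, via Proposition \ref{proposition:norm1} (the order is recovered from its norm-one units, $\mathcal O_K[R_{2m}^1]=R_{2m}$) and Jun's result that there is a unique maximal order of $B_\mu$ containing $\mathcal O_F$. Your sketch replaces this with a vague appeal to pinning down a minimal-volume orbifold in the commensurability class; but the commensurability class contains orbifolds of volume smaller than $\mathrm{vol}(M_0)$ (e.g.\ quotients by normalizers), so that does not localize $\Gamma$ correctly. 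The well-ordering of volumes is used in the paper only to select, among all possible sandwich data for the fixed pair $(M_i,N_i)$, one minimizing $\mathrm{CoVol}(\Gamma\cap\rho(B^1))$; combined with Lemma \ref{lemma:lemma1} this forces the equality displayed above. Finally, note a direction error in your first step: the fundamental group of the common cover $M$ is contained in both $\rho(\R_i^1)$ and a conjugate of $\rho(\Sc_i^1)$; it is the quotient $M/H$ whose lattice contains both, and it is there (via Chen's Lemma 4 to replace $\gamma_2$ by an element of $\rho(B^\times)$) that the argument takes place.
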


Our proof of Theorem \ref{theorem:notsunada} will require the following lemma.

\begin{lemma}\label{lemma:lemma1}
Let $\mathcal O\in \{\R_i : i\geq 0 \}$ and $ J\supseteq \mathcal O^1$ be a subgroup of $B^1$. If $[J:\mathcal O^1]<\infty$ then $J\subseteq \R^1_0$.
\end{lemma}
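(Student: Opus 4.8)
The plan is to work prime-by-prime, using the local description of the orders $\R_i$ in the chain. Recall that $\R_i = \R_i^1 = \R_i^{0,0}$, so by construction $(\R_i)_\nu = \R_\nu$ (a fixed maximal order) for every prime $\nu \notin \{\mu_1,\mu_2\}$, while $(\R_i)_{\mu_k} = R_{2i}(F_k)$ for $k=1,2$. In particular $\R_0$ is the maximal order with $(\R_0)_{\mu_k} = R_0(F_k)$, and $\R_i \subseteq \R_0$ for all $i$. First I would reformulate the claim: since $J \supseteq \mathcal O^1$ with finite index and $J \subseteq B^1$, I want to show $J$ normalizes $\R_0$ locally everywhere, i.e. $J \subseteq N(\R_0) \cap B^1$, and then identify $N(\R_0)^1$ with $\R_0^1$. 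The reduced-norm-one elements of $N(M_2(\mathcal{O}_{k_\nu})) = \GL_2(\mathcal{O}_{k_\nu})k_\nu^\times$ are exactly $\SL_2(\mathcal{O}_{k_\nu})$ up to the scalars of norm one, which for the relevant fields gives back $R_0(F_\nu)^1$; so $N(\R_0) \cap B^1 = \R_0^1$ after taking the global intersection, and the lemma follows.

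The key step is to show $J$ normalizes $\mathcal O_\nu$ locally for every $\nu$, or at least normalizes $\R_0$ locally. For a prime $\nu \notin \{\mu_1,\mu_2\}$ this is automatic: $\mathcal O_\nu = \R_\nu$ is maximal, the unique maximal order up to conjugacy in the split case is $M_2(\mathcal{O}_{k_\nu})$ and the unique one in the ramified case, and in either case any element of $B^1$ (indeed $B^\times$) of bounded denominator that stabilizes $\mathcal O^1$ lies in the normalizer; more directly, $\mathcal O_\nu = \R_\nu$ is the unique maximal order containing it for ramified $\nu$, and for the vast majority of split $\nu$ one uses that $J$, having finite index over $\mathcal O^1$, acts on the Bruhat--Tits tree fixing the vertex of $\mathcal O_\nu$ because an infinite-index phenomenon (translation along the tree) is what unbounded orbits would force. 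The serious point is at $\mu_1$ and $\mu_2$, where $\mathcal O_{\mu_k} = R_{2i}(F_k)$ is a non-maximal Jun order. Here I would invoke Proposition \ref{proposition:norm1}: $\mathcal O_k[R_{2i}^1] = R_{2i}$, so the $\mathcal{O}_{k}$-algebra generated by $(\mathcal O)^1_{\mu_k}$ is all of $R_{2i}(F_k)$. Consequently, if $x \in J$ then $x (\mathcal O^1)_{\mu_k} x^{-1} \subseteq J$'s closure acts within a bounded set — precisely, $x R_{2i}(F_k) x^{-1} = x \mathcal{O}_k[(\mathcal O^1)_{\mu_k}] x^{-1} = \mathcal{O}_k[x(\mathcal O^1)_{\mu_k}x^{-1}]$ is again an order, and finiteness of $[J:\mathcal O^1]$ forces this order to be commensurable with $R_{2i}(F_k)$ with a uniform bound, whence $x$ normalizes $R_{2i}(F_k)$. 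Then Proposition \ref{proposition:normnormprop} gives $x_{\mu_k} \in N(R_{2i}(F_k)) \subseteq N(R_0(F_k))$, so $x$ normalizes $(\R_0)_{\mu_k}$ as well.

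Putting the local statements together: $x \in J$ implies $x_\nu \in N((\R_0)_\nu)$ for all $\nu$, i.e. $x \in N(\R_0)$ in the adelic sense, hence $x \in N(\R_0) \cap B^1$. It remains to check $N(\R_0) \cap B^1 = \R_0^1$, equivalently that the distance idele / class field machinery of Section \ref{section:quaternionalgebraprelims} forces a norm-one global element normalizing a maximal order to already lie in it. Concretely, $n(x) \in n(N(\mathfrak{N}(\R_0)))$ and $n(x) = 1$, and one reads off from $K(\R_0) = K(B)$ together with $\rho_{\R_0}(\R_0, x\R_0 x^{-1})$ being trivial that $x\R_0x^{-1} \cong \R_0$ as an order conjugate by a norm-one element; a short argument with the normalizer of a maximal order locally (it is $k_\nu^\times$ times the unit group, which contributes nothing of norm one beyond $\R_{0,\nu}^1$) then yields $x \in \R_0^1$. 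Therefore $J \subseteq \R_0^1$, which is the assertion.

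I expect the main obstacle to be the argument at the primes $\mu_1, \mu_2$: one must genuinely use that the Jun orders $R_{2i}(F_k)$, though not maximal, have the ``rigidity'' property that their norm-one unit groups generate them (Proposition \ref{proposition:norm1}) and that their normalizers sit inside the normalizer of the ambient maximal order (Proposition \ref{proposition:normnormprop}); without these one could not rule out $J$ escaping to a larger order at those two primes. The finite-index hypothesis is exactly what converts ``$x$ conjugates $\mathcal O^1$ into something commensurable'' into ``$x$ normalizes $\mathcal O$,'' and I would be careful to phrase this via the standard fact that an order is determined by (indeed generated by, here) its norm-one units together with $\mathcal{O}_k$, so that a finite-index overgroup of $\mathcal O^1$ can only conjugate $\mathcal O$ to orders in a single commensurability class with bounded index, forcing the normalizer condition.
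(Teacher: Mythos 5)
There is a genuine gap at the central step of your argument, the claim that for each individual $x\in J$ the finiteness of $[J:\mathcal O^1]$ forces $x$ to normalize $R_{2i}(F_k)$ at $\mu_k$. Finite index only gives that $x\mathcal O^1x^{-1}$ and $\mathcal O^1$ are commensurable, hence that the orders $x\mathcal O x^{-1}$ and $\mathcal O$ are commensurable; commensurable orders need not be equal, so ``whence $x$ normalizes $R_{2i}(F_k)$'' does not follow. Worse, the intermediate statement is actually false: $J=\R_0^1$ satisfies the hypotheses of the lemma (it contains $\R_i^1$ with finite index), yet an element of $R_0(F_k)^1$ need not normalize the non-maximal order $R_{2i}(F_k)$ --- conjugation by a generic element of $R_0^1$ moves the distinguished copy of $\mathcal O_{F_k}$ inside $R_{2i}(F_k)$ to a different embedding. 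So no argument can establish ``every $x\in J$ normalizes $\mathcal O_{\mu_k}$''; the conclusion $J\subseteq\R_0^1$ has to be reached without it. (Your final reduction, that a global norm-one element normalizing $\R_0$ everywhere locally lies in $\R_0^1$, is fine, and the discussion at primes outside $\{\mu_1,\mu_2\}$ is sketchy but repairable; the fatal point is at $\mu_1,\mu_2$.)

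The paper avoids this by working with $J$ as a whole rather than element by element: since $J$ is a finite union of cosets $g_j\mathcal O^1$, the ring $\mathcal J=\mathcal O_K[J]$ is a finitely generated $\mathcal O_K$-module, hence an order of $B$, and one places it inside a maximal order $\mathcal M$. Proposition \ref{proposition:norm1} then upgrades $\mathcal O_\mu^1\subseteq\mathcal M_\mu$ to $\mathcal O_\mu\subseteq\mathcal M_\mu$, so $\mathcal M_\mu$ contains $\mathcal O_{F}$; the decisive local input is Jun's uniqueness of the maximal order containing $\mathcal O_{F}$, which pins down $\mathcal M_\mu=(\R_0)_\mu$ and hence $\mathcal M=\R_0$, giving $J\subseteq\mathcal M^1=\R_0^1$. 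You correctly identified Proposition \ref{proposition:norm1} as essential, but you never invoke this uniqueness statement, and Proposition \ref{proposition:normnormprop} (the inclusion $N(R_{2m})\subseteq N(R_0)$) cannot substitute for it because, as above, elements of $J$ need not lie in $N(R_{2m})$ in the first place.
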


\begin{proof}
Denote by $\mathcal J=\mathcal O_K[J]$ the ring generated over $\mathcal O_K$ by $J$. Write $J=\bigcup_{j=1}^\infty\{g_j\mathcal O^1\}$ and $\mathcal J=\sum\mathcal O_K\{g_j\mathcal O^1\}$. As $[J:\mathcal O^1]<\infty$, $\mathcal J$ is a finitely generated $\mathcal O_K$-module containing $\mathcal O_k[\mathcal O^1 ]$ and hence an order of $B$. Let $\mathcal M$ be a maximal order of $B$ containing $\mathcal J$. Then $\mathcal O^1\subseteq \mathcal M\cap\R_0$, hence $\mathcal O_\nu\subseteq \mathcal M_\nu\cap{\R_0}_\nu$ for all primes $\nu$ of $K$. Recall from Section \ref{subsection:theconstruction} that the chain $\{ \R_i\}$ was constructed using a finite set of primes $\{ \mu_1,..,\mu_t\}$ of $K$ so that for every $i\geq 0$, ${\R_i}_\nu$ is maximal if $\nu\not\in \{ \mu_1,..,\mu_t\}$ and ${\R_i}_{\mu_j}$ is conjugate to $R_{2i}(F_j)$ for $j=1,..,t$. We will prove the Lemma in the case that ${\R_i}_{\mu_j}=R_{2i}(F_j)$ for $j=1,..,t$; the other cases follow from virtually identical proofs upon appropriately conjugating every appearance of $R_{2i}(F_j)$.

If $\nu\not\in \{ \mu_1,..,\mu_t\}$ then $\mathcal M_\nu={\R_0}_\nu=\mathcal O_\nu$. At a prime $\mu\in\{ \mu_1,..,\mu_t\}$ we see that $\mathcal O_\mu^1\subseteq \mathcal M_\mu\cap {\R_0}_\mu$, hence $\mathcal O_\mu \subseteq \mathcal M_\mu \cap {\R_0}_\mu$ by Proposition \ref{proposition:norm1}. Therefore $\mathcal M_\mu$ contains $\mathcal O_F$ (since $\mathcal O_F\subset R_{2i}(F)=\mathcal O_\mu$). By Proposition 2.2 of \cite{jun-primitive} there is a unique maximal order of $B_\mu$ containing $\mathcal O_F$, hence $\mathcal M_\mu={\R_0}_\mu$. We have shown that for all $\nu$, $\mathcal M_\nu={\R_0}_\nu$, hence $\mathcal M=\R_0$. As $J\subset \mathcal J^1\subseteq \mathcal M^1=\R_0^1$, we have proven the lemma.\end{proof}

\begin{remark}\label{remark:lemma1remark}
By conjugating every appearance of $\R_i$ and $\R_0$ in the proof of Lemma \ref{lemma:lemma1} one obtains the following: Let $x\in B^\times$, $\mathcal O\in \{x\R_i x^{-1} : i\geq 0 \}$ and $J\supseteq \mathcal O^1$ be a subgroup of $B^1$. If $[J:\mathcal O^1]<\infty$ then $J\subseteq x \R^1_0 x^{-1}$.
\end{remark}

We now prove Theorem \ref{theorem:notsunada}. Write $M_i=\rho(\R_i^1)\backslash X$ and  $N_i=\rho(\Sc_i^1)\backslash X$, where $X$ is either two or three dimensional hyperbolic space (depending on whether the archimedean prime of $K$ which is unramified in $B$ is real or complex). Finally, suppose that $M_i$ and $N_i$ arose from Sunada's construction. That is, there exists a Riemannian manifold $M$ and a Sunada triple  $(H, H_1, H_2)$ such that $M_i$ and $N_i$ are sandwiched between $M$ and $M/H$. Let $\Gamma$ be the discrete subgroup of $G = GL_2(\mathbb R)^s \times GL_2(\mathbb C)^{r_2}$ such that $M/H=\Gamma\backslash X$.   
 
Recall that the set of volumes of hyperbolic 3-orbifolds comprise a well-ordered subset of the real numbers \cite{thurston, gromov, meyerhoff} and that an analogous statement holds for hyperbolic 2-orbifolds by the Riemann-Hurwitz formula. We may therefore choose the Sunada triple $(H, H_1, H_2)$ and the Riemannian manifold $M$ so that the following property is satisfied: If $(H^\prime, H_1^\prime, H_2^\prime)$ is a Sunada triple and $M^\prime$ a Riemannian manifold such that $M_i$ and $N_i$ are sandwiched between $M^\prime$ and $M^\prime/H^\prime$, then writing $M^\prime/H^\prime=\Gamma^\prime\backslash X$ we have that $\mbox{CoVol}(\Gamma^\prime\cap \rho(B^1))\geq \mbox{CoVol}(\Gamma\cap \rho(B^1))$.

We have the following diagram of coverings:

$$\xymatrix@R=10pt@C=15pt{
&&M\ar@{=}[d]\\
&&{\Gamma_0\backslash X}\ar@{->}[ddll]\ar@{->}[ddrr]\ar@{->}[dddd]\\ \\
**[l]\rho(\R_i^1)\bs X\cong \Gamma_1^\prime\bs X  \ar@{->}[ddrr]  &&&& **[r]{\Gamma_2^\prime\bs X \cong \rho(\Sc_i^1)\bs X}\ar@{->}[ddll]\\ \\
&&{\Gamma\bs X}\ar@{=}[d]\\
&&M/H
}$$

where $\Gamma_0\subseteq \Gamma_1^\prime, \Gamma_2^\prime \subseteq \Gamma$ are discrete subgroups of $G$. We have $\Gamma_1^\prime=\gamma_1\rho(\R_i^1)\gamma_1^{-1}$ and $\Gamma_2^\prime=\gamma_2\rho(\Sc_i^1)\gamma_2^{-1}$ for some $\gamma_1,\gamma_2\in G^\times$. By conjugating if necessary, we may assume that $\gamma_1=\mbox{id}$. Since $\Gamma_0$ is of finite index in both $\rho(\R_i^1)$ and $\gamma_2\rho(\Sc_i^1)\gamma_2^{-1}$, by Lemma 4 of \cite{Chen} there is an element $g\in\rho(B^\times)$ such that $\gamma_2\rho(\Sc_i^1)\gamma_2^{-1}=g\rho(\Sc_i^1)g^{-1}$. If $\Gamma$ is of finite index over $\rho(\R_i^1)$ then $\Gamma\cap \rho(B^1)$ is of finite index over $\rho(\R_i^1)$ and by Lemma \ref{lemma:lemma1}, $\Gamma\cap \rho(B^1)\subseteq \rho(\R_0^1)$. Similarly, if $\Gamma$ is of finite index over $g\rho(\Sc_i^1)g^{-1}$ then Remark \ref{remark:lemma1remark} shows that $\Gamma\cap \rho(B^1)\subseteq g\rho(\Sc_0^1)g^{-1}$.
This shows that $\rho(\R_i^1)$ and $g\rho(\Sc_i^1)g^{-1}$ are almost conjugate subgroups of both $\rho(\R_0^1)$ and $g\rho(\Sc_0^1)g^{-1}$ (they cannot be conjugate in either group because $M_i$ and $N_i$ are not isometric). Because we have chosen $(H,H_1,H_2)$ so that the covolume of $\Gamma\cap\rho(B^1)$ is minimal, it must be the case that $\rho(\R_0^1)=\Gamma\cap\rho(B^1)=g\rho(\Sc_0^1)g^{-1}$. This is a contradiction as $M_0=\rho(\R_0^1)\backslash X$ and $N_0=\rho(\Sc_0^1) \backslash X$ are not isometric.

Thus $\Gamma$ is not of finite index over $\rho(\R_i^1)$, hence the cover $M=\Gamma_0\backslash X \rightarrow \Gamma\backslash X=M/H$ is not finite, a contradiction. This finishes the proof of Theorem \ref{theorem:notsunada}.


\begin{thebibliography}{1}

	\bibitem{Chen}
	S.~Chen.
	\newblock Constructing isospectral but nonisometric {R}iemannian manifolds.
	\newblock {\em Canad. Math. Bull.}, 35(3):303--310, 1992.

	\bibitem{Chinburg-Friedman}
	Ted Chinburg and Eduardo Friedman, \emph{An embedding theorem for quaternion
	  algebras}, J. London Math. Soc. (2) \textbf{60} (1999), no.~1, 33--44.

	\bibitem{Gordon-Survey}
	C.~S. Gordon.
	\newblock Survey of isospectral manifolds.
	\newblock In {\em Handbook of differential geometry, {V}ol. {I}}, pages
	  747--778. North-Holland, Amsterdam, 2000.

	
	\bibitem{Gordon-SunadaSurvey}
	C.~Gordon.
	\newblock Sunada's isospectrality technique: two decades later.
	\newblock In {\em Spectral analysis in geometry and number theory}, volume 484
	  of {\em Contemp. Math.}, pages 45--58. Amer. Math. Soc., Providence, RI,
	  2009.

	
	\bibitem{gromov}
	M.~Gromov.
	\newblock Hyperbolic manifolds (according to {T}hurston and {J}\o rgensen).
	\newblock In {\em Bourbaki {S}eminar, {V}ol. 1979/80}, volume 842 of {\em
	  Lecture Notes in Math.}, pages 40--53. Springer, Berlin, 1981.



\bibitem{jun-primitive}
S.~Jun.
\newblock On the certain primitive orders.
\newblock {\em J. Korean Math. Soc.}, 34(4):791--807, 1997.



\bibitem{linowitz}
B. Linowitz.
\newblock Selectivity in quaternion algebras.
\newblock preprint.

\bibitem{mac-reid-book}
C.~Maclachlan and A.~W. Reid.
\newblock {\em The arithmetic of hyperbolic 3-manifolds}, volume 219 of {\em
  Graduate Texts in Mathematics}.
\newblock Springer-Verlag, New York, 2003.

\bibitem{Mcreynolds}
D. B. McReynolds.
\newblock Isospectral locally symmetric manifolds.
\newblock http://arxiv.org/abs/math/0606540.


\bibitem{meyerhoff}
R.~Meyerhoff.
\newblock The cusped hyperbolic {$3$}-orbifold of minimum volume.
\newblock {\em Bull. Amer. Math. Soc. (N.S.)}, 13(2):154--156, 1985.

\bibitem{Mostow-Rigidity}
G.~D. Mostow.
\newblock {\em Strong rigidity of locally symmetric spaces}.
\newblock Princeton University Press, Princeton, N.J., 1973.
\newblock Annals of Mathematics Studies, No. 78.


\bibitem{Sunada}
T.~Sunada.
\newblock Riemannian coverings and isospectral manifolds.
\newblock {\em Ann. of Math. (2)}, 121(1):169--186, 1985.


\bibitem{thurston}
W.~Thurston.
\newblock {\em The Geometry and Topology of 3-Manifolds}.
\newblock Mimeographed Lecture Notes. Princeton Univ., 1977.


\bibitem{vigneras-isospectral}
M.-F. Vign{\'e}ras.
\newblock Vari\'et\'es riemanniennes isospectrales et non isom\'etriques.
\newblock {\em Ann. of Math. (2)}, 112(1):21--32, 1980.


\bibitem{vigneras}
M.-F. Vign{\'e}ras.
\newblock {\em Arithm\'etique des alg\`ebres de quaternions}, volume 800 of
  {\em Lecture Notes in Mathematics}.
\newblock Springer, Berlin, 1980.







\end{thebibliography}
\end{document}